\documentclass[journal, 12pt, onecolumn]{IEEEtran}

\usepackage{cite}
\usepackage{graphicx}
\usepackage{subfigure}
\usepackage{url}
\usepackage{stfloats}
\usepackage{latexsym}
\usepackage{amsfonts}
\usepackage{amsmath}
\usepackage{array}
\usepackage{textcomp}
\usepackage{pxfonts}
\usepackage{txfonts}
\usepackage{cases}
\usepackage{psfrag}
\usepackage{epsfig}
 \usepackage{setspace}
\newtheorem{definition}{Definition}

\newtheorem{example}{Example}
\newtheorem{solution}{Solution}
\newtheorem{theorem}{Theorem}

\hyphenation{op-tical net-works semi-conduc-tor} \doublespacing
\begin{document}

\title{\Large  On stability analysis by using Nyquist and Nichols Charts}

\author{S.M.Mahdi Alavi and Mehrdad Saif
\thanks{S.M.M. Alavi is with the Brain Stimulation Engineering Laboratory, Duke University,  Durham, NC 27710, USA. Email to: {\tt\small mahdi.alavi@duke.edu}.}
\thanks{M. Saif is with the Department of Electrical and Computer Engineering, University of Windsor, Windsor, ON N9B 3P4, Canada. Email to: {\tt\small msaif@uwindsor.ca}}
\thanks{This work was accomplished during the appointment of the first author at the University of Windsor between 2012 and 2013. The authors would like to acknowledge the partial support from Natural Sciences and Engineering Research Council of Canada (NSERC).}}

\markboth{ }%
{Shell \MakeLowercase{\textit{et al.}}: Bare Demo of IEEEtran.cls
for Journals}
\maketitle

\begin{abstract}
This paper reviews stability analysis techniques by using the Nyquist and Nichols charts. The relationship between the Nyquist and Nichols stability criteria is fully described by using the crossing concept. The results are demonstrated through several numerical examples. This tutorial provides useful insights into the loop-shaping based control systems design such as Quantitative Feedback Theory.  
\end{abstract}

\begin{IEEEkeywords}
Stability analysis, Nyquist diagram, Nichols chart, Quantitative feedback theroy (QFT), loop-shaping.
\end{IEEEkeywords}

\IEEEpeerreviewmaketitle

\section{Introduction}
Stability is always a major concern in analysis and design of feedback control systems. Consider the Linear Time Invariant (LTI) feedback system as shown in Fig. \ref{Fig:1DOF_standard_Feedback_structure}. $P(s)$ and $G(s)$ represent plant and controller Transfer Functions (TFs).
\begin{figure}[htb]
\psfrag{PPPP\r}[c][c]{$P(s)$} \psfrag{PC\r}[c][c]{$G(s)$}
\psfrag{y-t-\r}{$y(t)$} \psfrag{r-t-\r}{$r(t)$}
    \centering
   \includegraphics[scale=1]{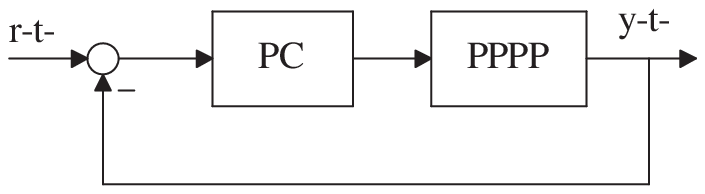}
     \caption{The standard feedback system.}
     \label{Fig:1DOF_standard_Feedback_structure}
\end{figure}

For the stability of the feedback system Fig. \ref{Fig:1DOF_standard_Feedback_structure}, the poles of the closed-loop system must have negative real parts and lie in the Left Half Plane, (LHP). The analogous is that the zeros of the characteristic function, $Q(s)$, given by
\begin{align}
\label{system_characteristics_function} Q(s)=1+L(s), \end{align}
must be located in LHP to ensure the stability of the feedback system Fig.  \ref{Fig:1DOF_standard_Feedback_structure}, where $L(s)=P(s)G(s)$ refers to the `loop function' in the control literature.

A variety of stability criteria are available in the literature, \cite{Houpis-LinearCont}. The Routh-Hurwitz and Jury tests determine the stability of the LTI systems without finding the roots of the characteristic equation in continuous and discrete time domains, respectively. Nyquist diagram is an alternative stability analysis method, providing some graphical information on how the poles of the closed-loop control system move by changing the gain of the controller in the complex plane.

Nichols chart is an alternative coordinates for presentation of the system's frequency response. In the Nichols chart, the loop-function's magnitude (in dB) versus its phase (in Degrees) is plotted. Since the Nichols chart presents a complete simultaneous information in relation to magnitude and phase of the system frequency response, a control approach that employs it as the design environment is more efficient than other control approaches that are only based on the magnitude or phase information. The use of both magnitude and phase information into the design of the feedback system significantly eases stability analysis of non-minimum phase, unstable plants and the plants that suffer from time-delay.

Despite the Nyquist stability has very well been established in the linear control text books, there is still significant demands for more discussions and examples to illustrate the Nichols chart stability criterion. This paper summarizes the results in \cite{Cohen-1994} and \cite{Chen-Ballance1997} in a tutorial fashion. The relationship between the Nyquist and Nichols stability criteria is exactly described through several illustrative examples.

As an application, the Quantitative Feedback Theory (QFT) \cite{Houpis-QFT} employs the Nichols chart as a tool \cite{Borhgesani-QFTToolbox} and \cite{Garcia-Sanz-QFTToolbox} for the design of the feedback compensator. In this approach, desired control objectives are given in terms of some graphical design bounds on the system's loop-function. The feedback controller is then obtained by loop-function shaping such that these design bounds are satisfied. Some applications of QFT include: fault diagnosis and control in chemical processes \cite{Alavi-2012, Alavi-2007} and in power systems \cite{Alavi-2008}, power control in wireless sensor networks \cite{Alavi-2009a}, active queue management in communications systems \cite{Alavi-2009b} to name but a few. From quantitative feedback design perspective, this tutorial provides useful insights into the loop-shaping process. 

The rest of the paper is organized as follows. In section \ref{sec:Stability Analysis Using Nyquist Plot}, the Nyquist stability test is described. In section \ref{sec:Stability Analysis Using Nichols Chart}, the stability criterion by using the Nichols chart, and its relation to the Nyquist stability test are presented. Several examples are given in section \ref{sec:Examples}, demonstrating the results.

\section{Stability Analysis Using Nyquist Plot}
\label{sec:Stability Analysis Using Nyquist Plot}
The Nyquist stability criterion is based on the Cauchy's principle. For the feedback system Fig. \ref{Fig:1DOF_standard_Feedback_structure} with the loop-function $L(s)$, consider a contour in s-plane enclosing the entire Right-Half Plane (RHP) including the imaginary axis. This contour refers to the `standard Nyquist contour' and is denoted by $\Gamma$ hereafter. The requirement for the Cauchy's principle is that $\Gamma$ must not pass through any pole of $L(s)$. Its radius should be large enough to enclose all Right-Half Plane (RHF) poles of $L(s)$. To avoid passing through the poles located on the imaginary axis, $\Gamma$ is constructed such that it encircles these poles by a semicircular arc with a very small radius tending to zero. Fig. \ref{Fig:standard_Nyquist_contour} shows how the standard Nyquist contour is chosen in the presence of different locations of poles. Zeros of $L(s)$ do not affect the selection of $\Gamma$. The Nyquist plot is then the map of $\Gamma$ trough $L(s)$ into the complex plane. For the sake of simplicity, it is assumed that there is no pole-zero cancellation between the numerator and denominator of $L(s)$.

\begin{figure}[htb]
\psfrag{Re\r}[c][c]{\scriptsize $Re\{s\}$} \psfrag{Im\r}{\scriptsize $Im\{s\}$}
\psfrag{g\r}[c][c]{$\Gamma$} \psfrag{splane\r}[c][c]{\scriptsize $s$-plane}
    \centering
   \includegraphics[scale=1]{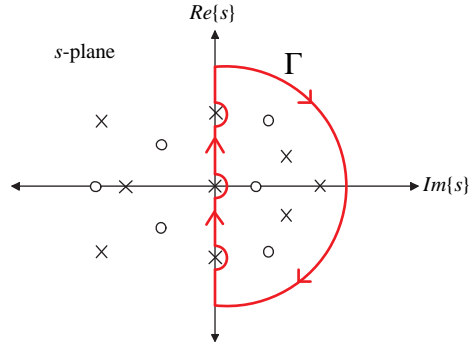}
     \caption{The standard Nyquist contour $\Gamma$. `$\times$'s and `$\circ$'s represent
      the possible poles and zeros of $L(s)$, respectively. $\Gamma$ must be large enough to enclose all RHP poles of $L(s)$. The poles located on the imaginary axis must be detoured by $\Gamma$.}
     \label{Fig:standard_Nyquist_contour}
\end{figure}

By using Cauchy's principle, the following equation holds for the obtained Nyquist plot of $L(s)$ in the complex plane,
\begin{align}
\label{Cauchy_principle_Eq} N_z=N+N_p,
\end{align}
where, $N_z$ and $N_p$ are the number of zeros and poles of $L(s)$ inside the contour $\Gamma$. $N$ is the number of times that the Nyquist plot encircles the origin.

According to (\ref{Cauchy_principle_Eq}), the number of times that Nyquist plot of $L(s)$ encircles the critical point $(-1,0)$ plus the number of poles of $L(s)$ determine the number of zeros of $Q(s)$ which are located inside the contour $\Gamma$. From equation (\ref{system_characteristics_function}), the zeros of $Q(s)$ which are located inside the contour $\Gamma$ are unstable poles of the feedback system Fig. \ref{Fig:1DOF_standard_Feedback_structure}. Therefore, the number of times that Nyquist plot of $L(s)$ encircles the critical point $(-1,0)$ plus the number of poles of $L(s)$ determine the number of unstable poles of the closed-loop system through the following theorem.

\begin{theorem}
\label{Theorem:Nyquist_stability_Criterion} Consider the feedback system as shown in Fig. \ref{Fig:1DOF_standard_Feedback_structure}. The followings are then equivalent:
\begin{itemize}
\item[-] The number of unstable poles of the feedback system Fig. \ref{Fig:1DOF_standard_Feedback_structure}, $N_z$, is given by:
\begin{align}
\label{Nyquist_Criterion_Eq} N_z=N+N_p,
\end{align}
$N_p$ is the number of poles of $L(s)$ inside the contour $\Gamma$ and $N$ is the number of times that the Nyquist plot encircles the critical point $(-1,0)$.
\item[-] The feedback system is stable if and only if the Nyquist diagram of $L(s)$ does not intersect the
critical point $(-1,0)$ and encircles it `$n$' times in the counterclockwise direction.\end{itemize}
\end{theorem}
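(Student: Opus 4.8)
The plan is to deduce both items from a single application of Cauchy's principle of the argument, but applied to the characteristic function $Q(s)=1+L(s)$ rather than to $L(s)$ itself. Recall the principle in the form already used in (\ref{Cauchy_principle_Eq}): if $f$ is meromorphic and $\Gamma$ is a simple closed contour passing through no zero and no pole of $f$, then the net number of times the image curve $f(\Gamma)$ winds around the origin equals the number of zeros of $f$ enclosed by $\Gamma$ minus the number of poles of $f$ enclosed by $\Gamma$, each counted with multiplicity and with one fixed orientation convention. I would simply apply this statement with $f=Q$ rather than $f=L$.

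The first step is to identify the three ingredients this produces. Under the standing assumption of no pole--zero cancellation, $L=PG$ is a ratio of coprime polynomials, so $Q=1+L$ has exactly the same poles as $L$; hence the number of poles of $Q$ inside $\Gamma$ is again $N_p$. By (\ref{system_characteristics_function}), the zeros of $Q$ inside $\Gamma$ are precisely the closed-loop poles of the system of Fig. \ref{Fig:1DOF_standard_Feedback_structure} lying in the region enclosed by $\Gamma$, i.e.\ the unstable closed-loop poles, which number $N_z$. Finally, $s\mapsto Q(s)$ is the map $s\mapsto L(s)$ composed with translation by $+1$, so $Q(\Gamma)$ encircles the origin exactly as many times, and in the same sense, as $L(\Gamma)$ encircles the critical point $(-1,0)$ --- that is, $N$ times. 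Substituting these three facts into the principle of the argument for $Q$ yields $N_z=N+N_p$, which is (\ref{Nyquist_Criterion_Eq}) and establishes the first bullet.

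For the second bullet I would argue as follows. The feedback system is stable exactly when $Q$ has no zero in the closed right-half plane, i.e.\ when $N_z=0$ and, in addition, $Q$ has no zero on $\Gamma$ itself. But a zero of $Q$ on the imaginary axis is exactly a frequency $\omega$ with $L(j\omega)=-1$, i.e.\ a point where the Nyquist diagram meets $(-1,0)$; ruling this out simultaneously excludes imaginary-axis closed-loop poles and guarantees that $\Gamma$ is an admissible contour so that the previous paragraph applies. Given that, (\ref{Nyquist_Criterion_Eq}) shows $N_z=0$ if and only if $N=-N_p$, which in the orientation convention in force means that the Nyquist plot of $L(s)$ encircles $(-1,0)$ exactly $n=N_p$ times counterclockwise.

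The step I expect to be the main obstacle is the careful treatment of the infinitesimal semicircular indentations by which $\Gamma$ detours around the poles of $L$ on the imaginary axis, which is used implicitly above. One must check that, as the indentation radius tends to zero, each detour (i) leaves the corresponding pole of $L$ strictly outside $\Gamma$, so that it is legitimately absent from $N_p$, and (ii) adds no net winding of $L(\Gamma)$ --- equivalently of $Q(\Gamma)$ --- about $(-1,0)$, the image of each tiny semicircle being a large circular arc that returns to its initial direction. A subsidiary, purely bookkeeping matter is to carry multiplicities and a single consistent orientation convention throughout, so that the sign of $N$ in (\ref{Nyquist_Criterion_Eq}) is the one for which ``counterclockwise'' is the correct word in the second bullet. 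Once these points are pinned down, the theorem follows from the principle of the argument with no further work.
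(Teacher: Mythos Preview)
Your proposal is correct and is essentially the paper's own approach: the paper's proof consists of the single sentence ``This is a direct consequence of the Cauchy's principle'' together with a reference, and you have simply written out that consequence in detail by applying the argument principle to $Q(s)=1+L(s)$ and translating the winding about the origin to the winding of $L(\Gamma)$ about $(-1,0)$. One small caution on your obstacle (ii): the large image arcs produced by the indentations around imaginary-axis poles are part of the Nyquist plot and are to be \emph{included} in the encirclement count $N$, not shown to contribute zero; for a pole of multiplicity $m$ the image arc sweeps $m\cdot 180^\circ$ and may or may not pass over $(-1,0)$, so the correct bookkeeping is simply to count $N$ using the full image curve rather than to argue the detours away.
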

\begin{proof}
This is a direct consequence of the Cauchy's principle. For a more comprehensive information interested readers are directed to \cite{Houpis-LinearCont}.
\end{proof}

It should be noted that an encirclement of the point $(-1,0)$ is counted as positive if it is in the same direction as the standard Nyquist contour and negative if it is in the opposite direction. In this context, since the standard Nyquist contour is clockwise, if $L(s)$ generates clockwise encirclement of the point $(-1,0)$, then $N$ is added with $+1$. Otherwise if $L(s)$ encircles the point $(-1,0)$ in counterclockwise, then $N$ is added with $-1$.

In the following, an approach is presented that simplifies counting $N$ in (\ref{Nyquist_Criterion_Eq}). It provides the basis for development of the stability criterion using Nichols chart.

\begin{definition}\label{Def:Smooth_curve} A Nyquist diagram is a smooth curve if it is differentiable for all $\theta\in [0,2\pi]$ where, $\theta$ represents the phase of the loop-function at each frequency, and is given by:
\begin{equation}
\label{phase_in_complex_plane_Eq}
\theta=\tan^{-1}(Im\{L(j\omega)\}/Re\{L(j\omega)\}).
\end{equation}
\end{definition}

\begin{definition}\label{Def:Crossing} Let $\mathcal{R}_{(-\infty,-1)}$ be the ray on the real axis from $-\infty$ to $-1$ in the complex plane. A crossing occurs when $L(s)$ intersects $\mathcal{R}_{(-\infty,-1)}$. The crossing is said to be positive if the tangent to the Nyquist plot
has a positive imaginary part. Consequently, it is negative if the tangent to the Nyquist plot has a negative imaginary part, as shown in Fig. \ref{Fig:crossing_Nyquist_plot}.\end{definition}

\begin{figure}[htb]
\psfrag{Re\r}[c][c]{\scriptsize $Re\{L(s)\}$} 
\psfrag{Im\r}{\scriptsize $Im\{L(s)\}$}
\psfrag{CP\r}[c][c]{\scriptsize $-1$}
\psfrag{R\r}[c][c]{\scriptsize $\mathcal{R}_{(-\infty,-1)}$}
\psfrag{Lplane\r}[c][c]{\scriptsize $L$-plane}
    \centering
   \includegraphics[scale=1]{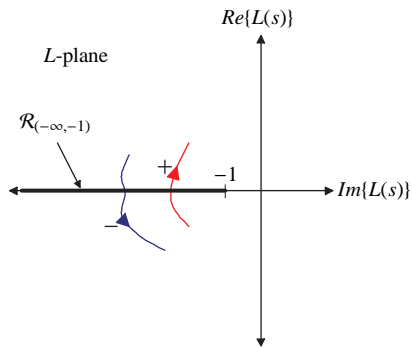}
     \caption{The crossing notation of a Nyquist diagram $L(s)$.}
     \label{Fig:crossing_Nyquist_plot}
\end{figure}

If the Nyquist plot is a smooth curve, the sign of crossing is obviously $+1$ or $-1$. However, when the Nyquist plot approaching a point on the real axis, there might be a {\em cusp} due to local singularity of the Nyquist plot. The crossing is not {\em well-defined} as the Nyquist plot is not differentiable at such a point. This situation might occur at $\omega=0$ or $\omega =\pm \infty$. To address this issue when there exists a cusp at $\omega=0$, the sign of the first nonzero derivative of $\theta$, given by (\ref{phase_in_complex_plane_Eq}), with respect to $\omega$ at $\omega=0$ is adopted as the sign of the crossing. If this derivative is positive, the crossing is supposed to be positive, otherwise, it is selected as negative. For the cusp at $\omega=\pm \infty$, because all derivatives of $\theta$ approach to zero as $\omega \rightarrow \pm \infty$, the above solution is not feasible. Instead, $\omega$ is replaced with $1/\omega$ and the derivation technique is again applied but at this time with respect to $1/\omega$.

It is then a direct consequence of the above statements that: `A crossing is said to be positive if the direction of the Nyquist plot is upward, otherwise the crossing is negative.'

\begin{theorem}
\label{Theorem:Modified_Nyquist_stability_Criterion} Consider the feedback system as shown in Fig. \ref{Fig:1DOF_standard_Feedback_structure}. Assume that $L(s)$ has `$n \geq 0$' unstable poles. By using the crossing concepts as discussed above, the followings are then equivalent.
\begin{itemize} \item[-] The sum of crossings is equal to the number of encirclements of the critical point $(-1,0)$ by the Nyquist diagram of $L(s)$.
\item[-] The feedback system is stable if and only if the Nyquist diagram of $L(s)$ does not intersect the critical point $(-1,0)$ and sum of its crossings is equal to `$-n$'.\end{itemize}
\end{theorem}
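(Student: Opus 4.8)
The plan is to derive Theorem~\ref{Theorem:Modified_Nyquist_stability_Criterion} from Theorem~\ref{Theorem:Nyquist_stability_Criterion} by establishing the single identity
\[
\text{(sum of signed crossings of } \mathcal{R}_{(-\infty,-1)}\text{)}\;=\;N,
\]
where $N$ is the number of encirclements of $(-1,0)$ by the Nyquist plot of $L(s)$, counted with the sign convention of Theorem~\ref{Theorem:Nyquist_stability_Criterion}. This rests on a standard topological fact: if a closed oriented curve in the plane avoids a point $p$, its winding number about $p$ equals the algebraic number of intersections of the curve with \emph{any} ray emanating from $p$, each intersection weighted $+1$ or $-1$ according to the sense in which the curve crosses the ray. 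Applying this with $p=(-1,0)$ and the ray $\mathcal{R}_{(-\infty,-1)}$, I would first reconcile the two sign conventions: a crossing is positive (Definition~\ref{Def:Crossing}) when the plot moves upward, whereas an encirclement is positive (Theorem~\ref{Theorem:Nyquist_stability_Criterion}) when it runs clockwise, i.e.\ in the orientation of $\Gamma$. Testing on a small clockwise loop around $(-1,0)$ shows that such a loop meets the leftward ray $\mathcal{R}_{(-\infty,-1)}$ precisely while travelling upward; hence clockwise encirclements produce positive crossings and the two counts coincide.

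Granting the identity, I would substitute it into (\ref{Nyquist_Criterion_Eq}). Since $L(s)$ has $N_p=n$ unstable poles, Theorem~\ref{Theorem:Nyquist_stability_Criterion} gives $N_z=N+n=\text{(sum of crossings)}+n$, which is the first bullet. Closed-loop stability is $N_z=0$, equivalently $\text{(sum of crossings)}=-n$. To complete the ``if and only if'' I would also observe that if the Nyquist plot meets $(-1,0)$ then, by (\ref{system_characteristics_function}), the characteristic function $Q(s)=1+L(s)$ has a zero on the imaginary axis, so the closed loop fails to be asymptotically stable (and, moreover, the winding number $N$ is not defined); non-intersection of $(-1,0)$ must therefore be assumed, exactly as in the statement.

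The main obstacle is rigorously handling the points where the Nyquist plot is not differentiable: the cusps at $\omega=0$ and $\omega=\pm\infty$ noted before the theorem, together with the arcs of the image of $\Gamma$ arising from the large semicircle and from the vanishing-radius indentations around $j\omega$-axis poles of $L(s)$. Here I would show that the ``first nonzero derivative'' rule renders the crossing sign well-defined and equal to the local contribution to the winding number about $(-1,0)$: near $\omega=0$ the curve $L(j\omega)$ touches the real axis, and the sign of the first nonvanishing derivative of the phase $\theta(\omega)$ in (\ref{phase_in_complex_plane_Eq}) fixes on which side the curve leaves the axis, hence whether that local arc adds $+1$, $-1$, or $0$ to the winding; replacing $\omega$ by $1/\omega$ performs the same computation at $\omega=\pm\infty$, where every derivative in $\omega$ vanishes. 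Finally one checks that for proper $L(s)$ the infinite semicircle of $\Gamma$ maps to a bounded set (to the origin when $L$ is strictly proper) and the small indentations map to large arcs, so that all of their crossings of $\mathcal{R}_{(-\infty,-1)}$ are already captured by the limiting procedure at $\omega=\pm\infty$ and at the detoured frequencies. Once each arc is assigned its crossing contribution in this way, the signed-crossing total equals $N$, and the theorem follows.
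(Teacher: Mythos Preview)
Your proposal is correct and follows the same approach as the paper: both derive the result from Theorem~\ref{Theorem:Nyquist_stability_Criterion} together with the crossing concept, identifying the signed crossing count with the encirclement number~$N$. The paper's own proof is a one-line remark (``an immediate consequence of the crossing concept and Theorem~\ref{Theorem:Nyquist_stability_Criterion}''), whereas you have supplied the topological justification, the sign-convention check, and the cusp analysis that the paper leaves implicit.
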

\begin{proof}
This is an immediate consequence of the crossing concept and Theorem \ref{Theorem:Nyquist_stability_Criterion}.
\end{proof}

The number of RHP zeros of $Q(s)$ is obtained by substituting the sum of crossings as well as the number of poles of $L(s)$ into (\ref{Nyquist_Criterion_Eq}).

\section{Stability Analysis Using Nichols Chart}
\label{sec:Stability Analysis Using Nichols Chart}

In the following, the stability criterion of the feedback system Fig. \ref{Fig:1DOF_standard_Feedback_structure} is studied which is based on the Nichols chart \cite{Nichols-Theory-of-Servomechanisms}. Again, it is assumed that no unstable pole/zero cancellation takes place in $L(s)$. The stability criterion is obtained by mapping the critical point $(-1,0)$, the ray $\mathcal{R}_{(-\infty,-1)}$ and the Nyquist plot of $L(s)$ into the Nichols chart, and by using the crossing concept.

$\Upsilon$ is the map that transforms any point $A(x,y)$ on the Nyquist diagram of $L(s)$ into its equivalent point, $A^\prime(\phi,r)$, in the Nichols chart, through:
\begin{equation}
\label{Map_from_Nyquist_To_NC_Eq} \Upsilon: A(x,y) \rightarrow A^\prime(\phi,r)
\end{equation}
\begin{align} \label{Phase_in_NC_inTerms_xy}& \phi=\tan^{-1}(y/x),\\
\label{Mag_in_NC_inTerms_xy} & r=20\log(\sqrt{x^2+y^2}).\end{align}
$x$ and $y$ denote the real and imaginary parts of the Nyquist plot of $L(s)$ in the complex plane.

\subsection{Stability Analysis Using Single-Sheeted Nichols Chart}
There are two aspects in the presentation of the system's frequency response in the Nichols chart. In the first aspect, the exact phase obtained through $\Upsilon$ is plotted inside the Nichols chart. The Nichols chart that covers phase information outside the range of $[-360^\circ,0)$, is so-called as the multiple-sheeted Nichols chart. Subsequently, the Nichols curve that is plotted in multiple-sheeted Nichols chart refers to multiple-sheeted Nichols plot.

The second aspect relies on this fact that the system's characteristics do not change if the phase response is shifted by $(\pm360k)^\circ$ for $k=0,1,\cdots$. Therefore, any segment of the multiple-sheeted Nichols plot outside the range of $[-360^\circ,0)$ can be horizontally shifted by integer multiples of $\pm360^\circ$ to be located inside $[-360^\circ,0)$. The Nichols chart with horizontal phase axis within the range of $[-360^\circ,0)$ is so-called as the single-sheeted Nichols chart. Subsequently, the Nichols curve that is plotted in the single-sheeted Nichols chart refers to single-sheeted Nichols plot.

In this section, the stability criterion of the feedback system Fig. \ref{Fig:1DOF_standard_Feedback_structure} using single-sheeted Nichols chart, is derived as follows.

Let $\mathcal{R}_{(-180^\circ,r>0dB)}$ be the ray of all points on the line $r>0dB$ and $\phi=-180^\circ$ in the single-sheeted Nichols chart. By simple calculations, the critical point $(-1,0)$ and the ray $\mathcal{R}_{(-\infty,-1)}$ in complex plane are respectively mapped through $\Upsilon$ to the point $(-180^\circ,0dB)$ and the ray $\mathcal{R}_{(-180^\circ,r>0dB)}$ in the single-sheeted Nichols chart as shown in Fig. \ref{Fig:crossing_Nichols_chart}.

\begin{figure}[htb]
\psfrag{Re\r}[c][c]{\scriptsize $20\log r$dB}
\psfrag{Im\r}{\scriptsize $\phi$(degrees)}
\psfrag{C0\r}[c][c]{\scriptsize $0^\circ$}
\psfrag{C1\r}[c][c]{\scriptsize $-180^\circ$}
\psfrag{C2\r}[c][c]{\scriptsize $-360^\circ$}
\psfrag{db\r}[c][c]{\scriptsize $0$dB}
\psfrag{R\r}{\scriptsize $\mathcal{R}_{(-180^\circ,r>0dB)}$}
\psfrag{NC\r}[c][c]{\scriptsize Nichols chart}
    \centering
   \includegraphics[scale=1]{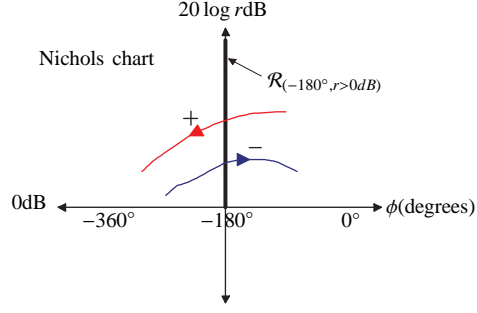}
     \caption{Illustration of the ray $\mathcal{R}_{(-180^\circ,r>0dB)}$ and crossing signs in the single-sheeted Nichols chart.}
     \label{Fig:crossing_Nichols_chart}
\end{figure}

Consider the Nyquist plot of a given $L(s)$ which is a curve denoted by $\Psi_{ny}$ in the complex plane. As mentioned earlier, $\Psi_{ny}$ is a continues and closed curve, symmetric with respect to the real axis. When mapping $\Psi_{ny}$ through $\Upsilon$ into the single-sheeted Nichols chart, the resulting curve, which is denoted by $\Psi_{nc}$, intersects the ray $\mathcal{R}_{(-180^\circ,r>0dB)}$ at the points that are maps of the crossings of $\Psi_{ny}$ in the complex plane.

For a positive crossing of $\Psi_{ny}$ in the complex plane, $\Psi_{nc}$ will pass $\mathcal{R}_{(-180^\circ,r>0dB)}$ to the left side in the single-sheeted Nichols chart. Similarly, For a negative crossing of $\Psi_{ny}$ in the complex plane, $\Psi_{nc}$ will pass $\mathcal{R}_{(-180^\circ,r>0dB)}$ to the right side in the single-sheeted Nichols chart.

For a given Nyquist diagram of $L(s)$ and its types of crossings in complex plane as shown in Fig. \ref{Fig:crossing_Nyquist_plot}, Fig. \ref{Fig:crossing_Nichols_chart} demonstrates the corresponding Nichols curve and types of the equivalent crossings in the single-sheeted Nichols chart.

\begin{definition}\label{Def:Crossing_in_NC}Let
$\mathcal{R}_{(r>0dB,-180^\circ)}$ be the ray on the line $r>0dB$ and $\phi=-180^\circ$ in the single-sheeted Nichols chart. A crossing occurs when the single-sheeted Nichols plot of $L(s)$ intersects $\mathcal{R}_{(r>0dB,-180^\circ)}$. For consistency with the Definition \ref{Def:Crossing}, the crossing is said to be positive if the direction of the single-sheeted Nichols plot of $L(s)$ is toward left of $\mathcal{R}_{(r>0dB,-180^\circ)}$, otherwise it is negative.
\end{definition}

\begin{theorem}
\label{Theorem:Stability_in_Single_Sheeted_NC} Consider the feedback system as shown in Fig. \ref{Fig:1DOF_standard_Feedback_structure}. Assume that $L(s)$ has `$n \geq 0$' unstable poles. By using the crossing concepts as discussed above, the followings are equivalent.
\begin{itemize}
\item[-] The sum of crossings in the single-sheeted Nichols chart demonstrates the number of times that the Nyquist plot of $L(s)$ encircles the critical point $(-1,0)$ in the Nyquist diagram.
\item[-] The feedback system is stable if and only if the single-sheeted Nichols plot of $L(s)$ does not intersect the critical point $(-180^\circ,0dB)$ and sum of its crossing with $\mathcal{R}_{(-180^\circ,r>0dB)}$ is equal to `$-n$'.\end{itemize}
\end{theorem}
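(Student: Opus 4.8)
The plan is to prove the theorem by transporting the signed crossing count from the Nyquist diagram to the single-sheeted Nichols chart through the map $\Upsilon$, and then invoking Theorems \ref{Theorem:Nyquist_stability_Criterion} and \ref{Theorem:Modified_Nyquist_stability_Criterion}. First I would record the elementary facts about $\Upsilon$ from (\ref{Map_from_Nyquist_To_NC_Eq})--(\ref{Mag_in_NC_inTerms_xy}): off the origin it is continuous, and, as already observed in the text, it carries the critical point $(-1,0)$ to $(-180^\circ,0\,\mathrm{dB})$ and the ray $\mathcal{R}_{(-\infty,-1)}$ onto the ray $\mathcal{R}_{(-180^\circ,r>0\mathrm{dB})}$. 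In particular, $\Psi_{ny}$ avoids $(-1,0)$ if and only if $\Psi_{nc}$ avoids $(-180^\circ,0\,\mathrm{dB})$, which settles the non-intersection hypothesis.

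The core of the argument is to establish a sign-preserving bijection between the crossings of $\Psi_{ny}$ with $\mathcal{R}_{(-\infty,-1)}$ and the crossings of $\Psi_{nc}$ with $\mathcal{R}_{(-180^\circ,r>0\mathrm{dB})}$. A point of $\Psi_{ny}$ lies on $\mathcal{R}_{(-\infty,-1)}$ exactly when $\mathrm{Im}\,L(j\omega)=0$ and $\mathrm{Re}\,L(j\omega)<-1$, which by (\ref{Phase_in_NC_inTerms_xy})--(\ref{Mag_in_NC_inTerms_xy}) is exactly the condition that the (continuously tracked) phase is an odd multiple of $180^\circ$ with $r>0\,\mathrm{dB}$; since in the single-sheeted chart all the lines $\phi=-180^\circ+360k^\circ$ fold onto the single line $\phi=-180^\circ$, and the fold points occur at multiples of $360^\circ$ rather than at $-180^\circ$, the folding neither creates nor destroys an intersection with the ray. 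For the signs, at such a point $y:=\mathrm{Im}\,L(j\omega)=0$, so $\tfrac{d\phi}{d\omega}=\tfrac{1}{x}\,\tfrac{dy}{d\omega}$ with $x:=\mathrm{Re}\,L(j\omega)<-1$, whence the sign of $\tfrac{d\phi}{d\omega}$ is opposite to the sign of $\tfrac{dy}{d\omega}$, i.e. opposite to the sign of the imaginary part of the tangent to $\Psi_{ny}$. Hence an upward (positive) Nyquist crossing corresponds to $\phi$ decreasing through $-180^\circ$, i.e. to $\Psi_{nc}$ passing $\mathcal{R}_{(-180^\circ,r>0\mathrm{dB})}$ toward its left, which by Definition \ref{Def:Crossing_in_NC} is a positive Nichols crossing, and symmetrically for negative crossings. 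The non-differentiable cases $\omega=0$ and $\omega=\pm\infty$ are inherited from the discussion preceding Theorem \ref{Theorem:Modified_Nyquist_stability_Criterion}: since $\phi$ and $\theta$ differ by a locally constant multiple of $360^\circ$, the sign of the first nonzero derivative of $\theta$ with respect to $\omega$ (or to $1/\omega$) equals that of the first nonzero derivative of $\phi$, so the adopted convention transfers verbatim.

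Finally I would assemble the two bullets. By the previous step, the sum of the signed crossings of $\Psi_{nc}$ with $\mathcal{R}_{(-180^\circ,r>0\mathrm{dB})}$ equals the sum of the signed crossings of $\Psi_{ny}$ with $\mathcal{R}_{(-\infty,-1)}$, which by Theorem \ref{Theorem:Modified_Nyquist_stability_Criterion} equals $N$, the number of encirclements of $(-1,0)$ by the Nyquist plot; this is the first bullet. For the second bullet, Theorem \ref{Theorem:Modified_Nyquist_stability_Criterion} says the feedback system is stable if and only if $\Psi_{ny}$ misses $(-1,0)$ and its crossing sum equals $-n$; translating both conditions through $\Upsilon$ by the first step and the preceding paragraph yields precisely the stated Nichols criterion (equivalently, $N_z=N+N_p=N+n$ in (\ref{Nyquist_Criterion_Eq}) vanishes iff the Nichols crossing sum is $-n$). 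I expect the only genuine obstacle to lie in the middle step --- making the ``left/right versus up/down'' correspondence airtight across the $360^\circ$ phase folding and at the non-differentiable points --- since the first and last steps are essentially formal once that correspondence is in hand.
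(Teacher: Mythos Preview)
Your proposal is correct and follows essentially the same route as the paper: the paper's proof is the single line ``This is a direct consequence of the above discussion,'' where that discussion is precisely the observation that $\Upsilon$ sends $(-1,0)$ and $\mathcal{R}_{(-\infty,-1)}$ to $(-180^\circ,0\,\mathrm{dB})$ and $\mathcal{R}_{(-180^\circ,r>0\mathrm{dB})}$ and that positive/negative Nyquist crossings become leftward/rightward Nichols crossings. Your write-up simply fleshes this out with the explicit derivative computation $d\phi/d\omega = y'/x$ at $y=0$ and a careful treatment of the folding and cusp cases, so it is a more detailed version of the same argument rather than a different one.
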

\begin{proof}
This is a direct consequence of the above discussion.
\end{proof}

\subsection{Stability Analysis Using Multiple-Sheeted Nichols Chart}
In this section, the proposed stability criterion given by Theorem \ref{Theorem:Stability_in_Single_Sheeted_NC} is extended to the
multiple-sheeted Nichols chart, where the phase of the system frequency response exceeds the range of $[-360^\circ,0)$.

In this case, the map of the critical point $(-1,0)$ through $\Upsilon$, from the complex plane into the multiple-sheeted Nichols chart, is repeated at the points $((-180\pm 360k)^\circ,0dB)$, $k=0,1, \cdots$. In addition, the map of the ray $\mathcal{R}_{(-\infty,-1)}$ through $\Upsilon$, from the complex plane into the multiple-sheeted Nichols chart will result in multiple rays on lines $((-180\pm 360k)^\circ,r>0dB)$, $k=0,1, \cdots$.

By comparing the single and multiple sheeted Nichols plots of $L(s)$, it is seen that the sum of crossings will be constant for both plots. There is only one difference that the Nichols plot of one is horizontally shifted with respect to another. Therefore, the following stability criterion is derived when using the multiple-sheeted Nichols chart.

\begin{theorem}
\label{Theorem:Stability_in_Multiple_Sheeted_NC} Consider the feedback system as shown in Fig.
\ref{Fig:1DOF_standard_Feedback_structure}. Assume that $L(s)$ has `$n \geq 0$' unstable poles. The followings are equivalent when
using the multiple-sheeted Nichols chart.
\begin{itemize}
\item[-] The sum of crossings illustrates the number of times that the Nyquist diagram of $L(s)$ encircles the critical point (-1,0) in the complex plane.
\item[-] The feedback system is stable if and only if the multiple-sheeted Nichols plot of $L(s)$ does not intersect the critical points $((-180 \pm 360k)^\circ,0dB)$ and sum of its crossing with the rays located on the lines $((-180\pm 360k)^\circ,r>0dB)$ is equal to `$-n$', $k=0,1, \cdots$.
\end{itemize}
\end{theorem}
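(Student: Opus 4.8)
The plan is to reduce Theorem \ref{Theorem:Stability_in_Multiple_Sheeted_NC} to Theorem \ref{Theorem:Stability_in_Single_Sheeted_NC} by exploiting the fact that the multiple-sheeted and single-sheeted Nichols plots of the same $L(s)$ differ only by horizontal translations of segments through integer multiples of $360^\circ$. First I would recall the construction: the multiple-sheeted plot $\Psi_{nc}^{m}$ is obtained from the exact (unwrapped) phase of $L(j\omega)$ via $\Upsilon$, while the single-sheeted plot $\Psi_{nc}^{s}$ is obtained by partitioning $\Psi_{nc}^{m}$ into maximal segments and shifting each one horizontally by the unique integer multiple of $360^\circ$ that brings it into $[-360^\circ,0)$. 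Since the vertical ($r$) coordinate is untouched by this operation, and since $\tan^{-1}$ is $180^\circ$-periodic so that the set of phases congruent to $-180^\circ \bmod 360^\circ$ is exactly $\{(-180\pm 360k)^\circ\}$, the map $\Upsilon$ sends the critical point $(-1,0)$ to the family of points $((-180\pm360k)^\circ,0\,\mathrm{dB})$ and the ray $\mathcal{R}_{(-\infty,-1)}$ to the family of rays on the lines $((-180\pm360k)^\circ,r>0\,\mathrm{dB})$, as asserted in the statement.

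Next I would set up the crossing correspondence. A crossing of $\Psi_{ny}$ with $\mathcal{R}_{(-\infty,-1)}$ in the complex plane occurs at a frequency $\omega_0$ where $L(j\omega_0)$ is real and less than $-1$; this is a single intrinsic event of the loop function, independent of which Nichols representation is used. In the multiple-sheeted chart this event appears as an intersection of $\Psi_{nc}^{m}$ with exactly one of the lines $\phi=(-180\pm360k)^\circ$ (the value of $k$ being fixed by the unwrapped phase at $\omega_0$), and in the single-sheeted chart it appears as an intersection of $\Psi_{nc}^{s}$ with the single line $\phi=-180^\circ$. Because the translation taking the relevant segment of $\Psi_{nc}^{m}$ to $\Psi_{nc}^{s}$ is purely horizontal, the local direction of traversal relative to the crossed vertical ray --- toward the left versus toward the right in the sense of Definition \ref{Def:Crossing_in_NC} --- is unchanged. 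Hence the sign assigned to the crossing is the same in both charts, and there is a sign-preserving bijection between the crossings of $\Psi_{nc}^{m}$ with the family of rays and the crossings of $\Psi_{nc}^{s}$ with $\mathcal{R}_{(-180^\circ,r>0\,\mathrm{dB})}$. In particular the two sums of crossings coincide.

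With this identity in hand, the first bullet follows immediately: by Theorem \ref{Theorem:Stability_in_Single_Sheeted_NC} the sum of crossings in the single-sheeted chart already equals the number of encirclements of $(-1,0)$ by $\Psi_{ny}$, and the sum over the multiple-sheeted chart equals the same quantity. For the second bullet, non-intersection of $\Psi_{nc}^{m}$ with the points $((-180\pm360k)^\circ,0\,\mathrm{dB})$ is equivalent, under $\Upsilon$ and the shifting procedure, to non-intersection of $\Psi_{nc}^{s}$ with $(-180^\circ,0\,\mathrm{dB})$, which by the correspondence above together with Theorem \ref{Theorem:Stability_in_Single_Sheeted_NC} is equivalent to closed-loop stability together with the sum of crossings being $-n$; transporting this back through the bijection gives the stated condition in the multiple-sheeted chart.

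I expect the main obstacle to be the careful bookkeeping of the segment-shifting at the strip boundaries: one must check that cutting $\Psi_{nc}^{m}$ at phases $\equiv 0^\circ \bmod 360^\circ$ and reassembling the pieces inside $[-360^\circ,0)$ neither creates nor destroys an intersection with a $\phi=(-180\pm360k)^\circ$ line, and that the cusp conventions at $\omega=0$ and $\omega=\pm\infty$ from Section \ref{sec:Stability Analysis Using Nyquist Plot} transfer verbatim --- they do, since they are phrased in terms of derivatives of $\theta$, which is itself only defined modulo $180^\circ$. Once this is dispatched, the argument is purely a change of coordinates on top of Theorem \ref{Theorem:Stability_in_Single_Sheeted_NC}.
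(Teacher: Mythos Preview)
Your proposal is correct and follows essentially the same route as the paper: the paper's own proof is a one-line appeal to the preceding discussion that the multiple-sheeted and single-sheeted plots differ only by horizontal shifts of $\pm 360^\circ$, so the sum of crossings is preserved, and then invokes Theorem~\ref{Theorem:Stability_in_Single_Sheeted_NC}. You have simply spelled out that argument with more care, including the sign-preserving bijection of crossings and the bookkeeping at strip boundaries and cusps, none of which the paper makes explicit.
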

\begin{proof}
This is a direct consequence of the above discussion and extension of Theorem \ref{Theorem:Stability_in_Single_Sheeted_NC} to the
multiple-sheeted Nichols chart.
\end{proof}

\subsubsection{Multiple-Sheeted Nichols Chart versus Single-Sheeted Nichols Chart}
By comparing the single and multiple Nichols charts, the following features are highlighted.

\begin{itemize}
\item[-] There is no priority between multiple-sheeted and single-sheeted Nichols charts or plots. Only in some cases, the plot of the loop-function's frequency response into the multiple-sheeted Nichols will result in the closed curve. However, multiple crossing ray and multiple critical points appear that must be taken into account when analyzing and designing the control system using multiple-sheeted Nichols chart. 
\item[-] What is very important is that the system characteristics will be preserved by shifting the phase response by $\pm(360k)^\circ$ for $k=0,1,\cdots$. Thus, the use of multiple or single sheeted Nichols chart is a matter of convenience only and do not affect the analysis and design of the feedback systems.
\item[-] In general, there exist a vertical line, $\phi=k$, in which both multiple-sheeted and single-sheeted Nichols plots are symmetric to that. Half of the Nichols chart, located in one side of $\phi=k$, is related to the system frequency response for positive $\omega$, and the other half part, located in the other side of $\phi=k$, is related to the system frequency response for negative $\omega$. Since both full and half multiple-sheeted or single-sheeted Nichols charts result in same number of crossings, half of the multiple-sheeted or single-sheeted Nichols is enough for the stability analysis.
\end{itemize}
\section{Examples}
\label{sec:Examples}
In the following several examples are provided that cover a large variety of the loop-function models, stable and unstable zeros and poles.

\begin{figure}
\centering
\psfrag{Im}[c][c]{\scriptsize  $Im\{L(s)\}$}
\psfrag{Re}{\scriptsize  $Re\{L(s)\}$}
\psfrag{w0}{\scriptsize$ \omega=0$}
\psfrag{win}{\scriptsize $\omega=\pm \infty$}
\psfrag{xin}{\scriptsize  $0$}
\psfrag{w0p}[r][r]{\scriptsize$\omega=0^{+}$}
\psfrag{w0m}{\scriptsize $\omega=0^{-}$}
\psfrag{wip}{\scriptsize $\omega=+\infty$}
\psfrag{wim}{\scriptsize $\omega=-\infty$}
\subfigure[K=5]{\label{SubFig:Stable_MP_Ny_Ex1}
\psfrag{R}{\scriptsize  $\mathcal{R}_{(-\infty,-1)}$}
\psfrag{wc}[c][c]{\scriptsize $\omega=\pm3.32$}
\psfrag{x0}{\scriptsize  $5$}
\psfrag{xc}[c][c]{\scriptsize  -0.50}
\includegraphics[scale=.5]{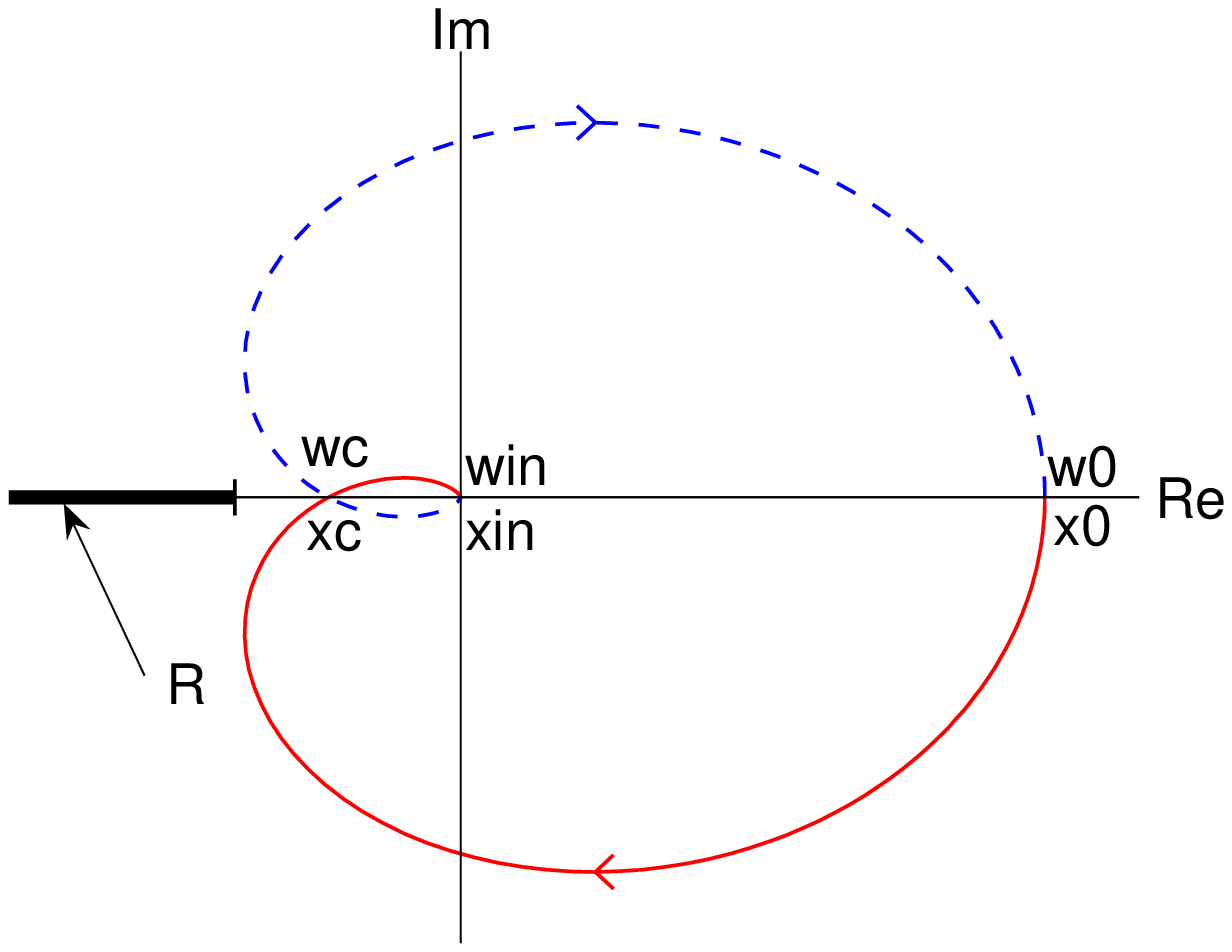}}
\hspace{1em}
\subfigure[K=5]{\label{SubFig:Stable_MP_NC_Ex1}
\psfrag{R}{\scriptsize  $\mathcal{R}_{(-180^\circ,r>0dB)}$}
\psfrag{wc}{\scriptsize $\omega=1.16$}
\psfrag{-80}{\scriptsize  $-\infty$}
\includegraphics[scale=.5]{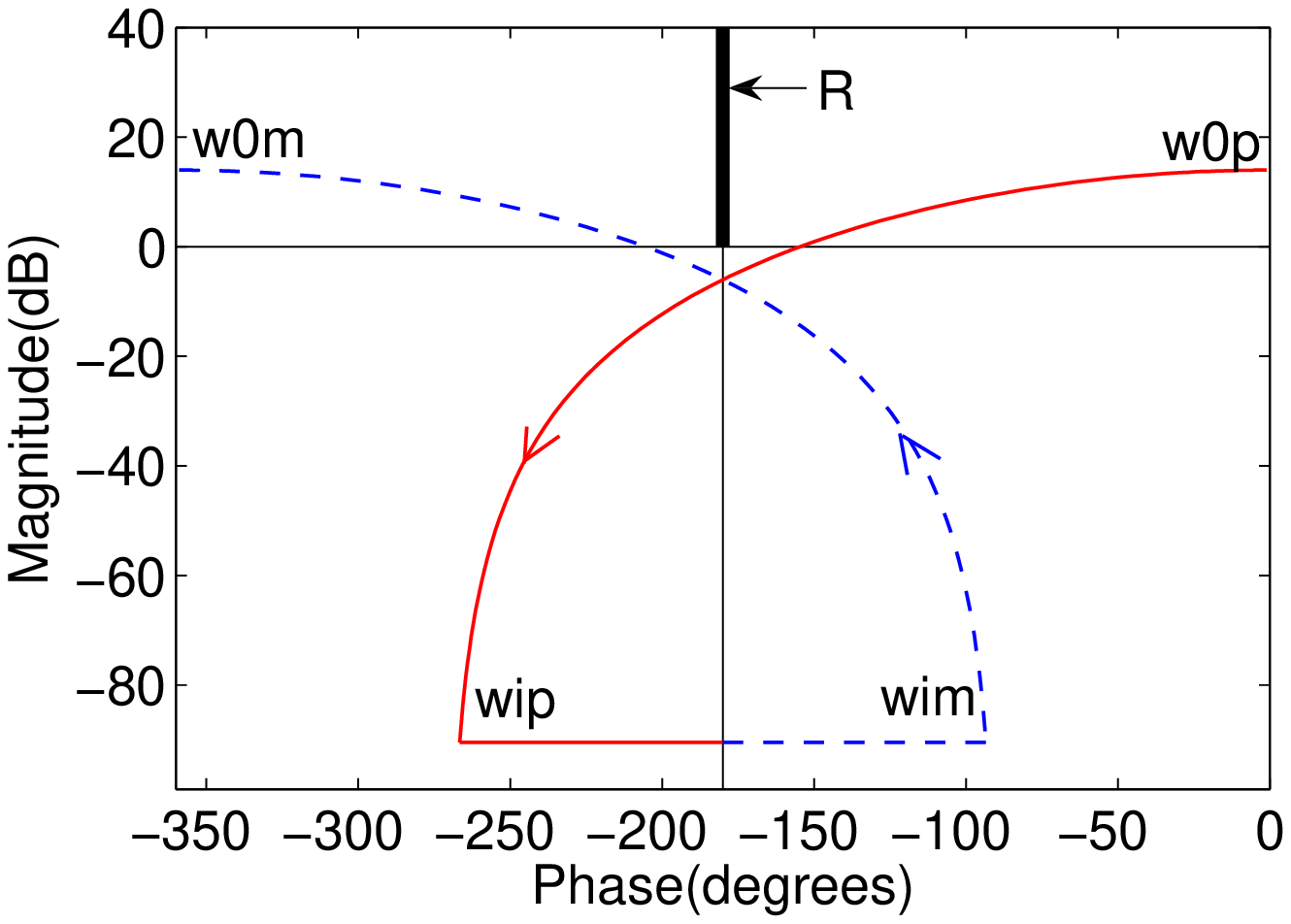}}\\
\subfigure[K=15]{\label{SubFig:Stable_MP_Ny_Ex2}
\psfrag{wc}[c][c]{\scriptsize $\omega=\pm3.78$}
\psfrag{x0}{\scriptsize $15$}
\psfrag{xc}[c][c]{\scriptsize -1.47}
\psfrag{R}{\scriptsize $\mathcal{R}_{(-\infty,-1)}$}
\includegraphics[scale=.5]{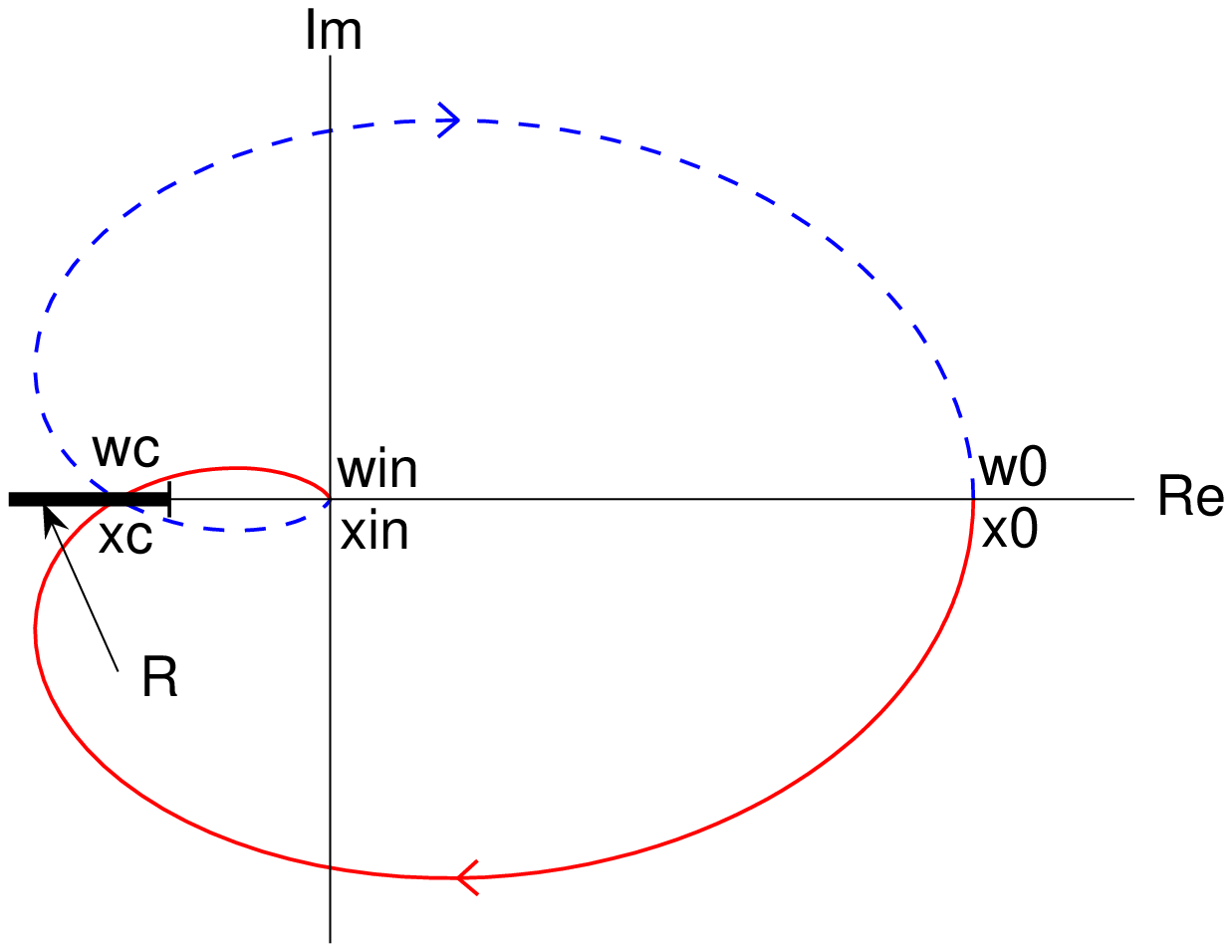}}
\hspace{1em}
\subfigure[K=15]{\label{SubFig:Stable_MP_NC_Ex2}
\psfrag{R}{\scriptsize  $\mathcal{R}_{(-180^\circ,r>0dB)}$}
\psfrag{wc}{\scriptsize $\omega=1.16$}
\psfrag{-80}{\scriptsize $-\infty$}
\includegraphics[scale=.5]{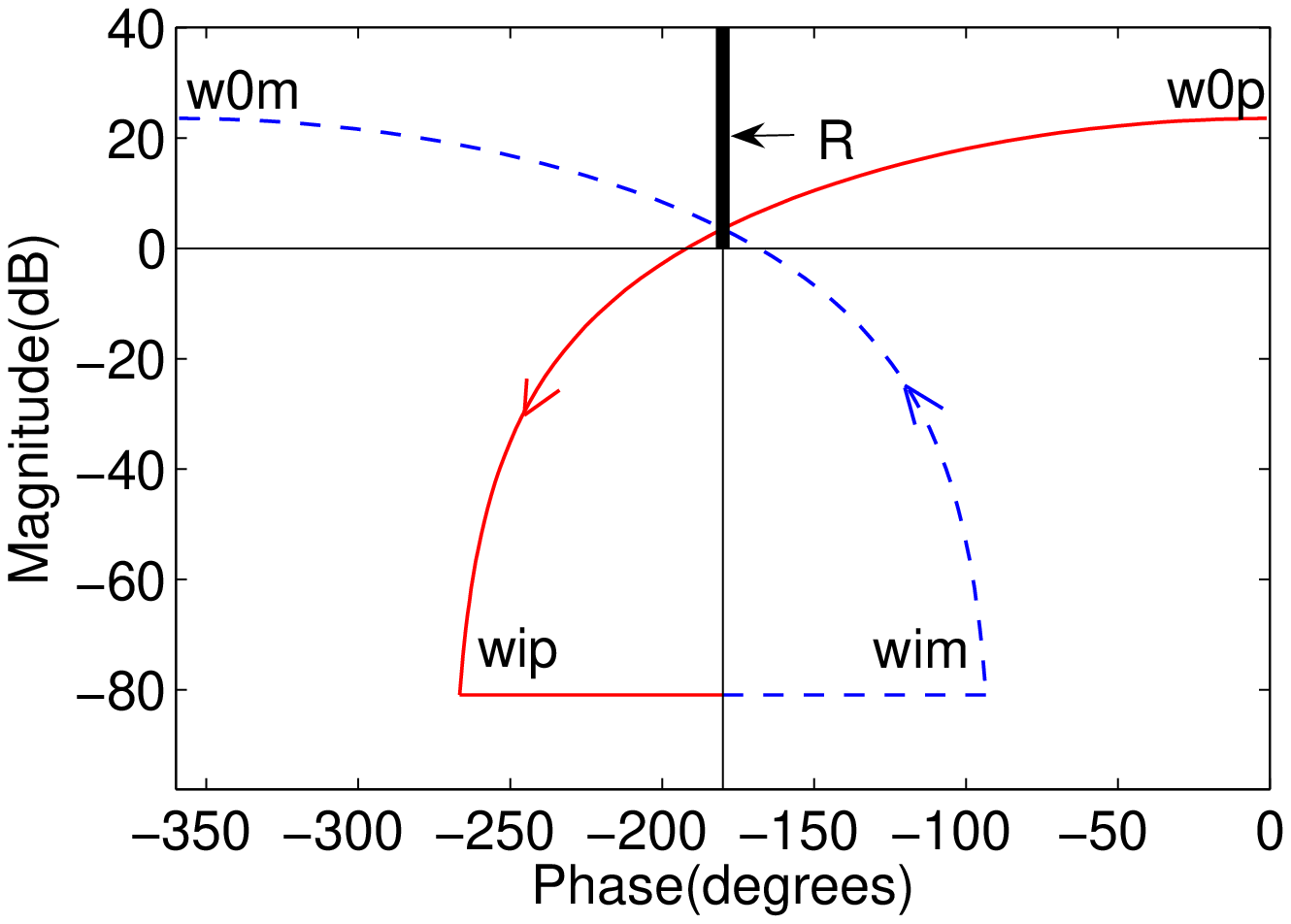}}
\caption{Nyquist and Nichols plots of Example \ref{MP_Stable_Ex}.}
\label{Fig:Stable_MP_NC_Ex}
\end{figure}

\begin{example}
\label{MP_Stable_Ex} Consider the feedback system Fig. \ref{Fig:1DOF_standard_Feedback_structure} with a stable, minimum
phase loop-function as given by:
\begin{equation} \label{Eq:Stable_MP_NC_Ex}
L(s)=\frac{K}{(\frac{s}{1}+1)(\frac{s}{2}+1)(\frac{s}{3}+1)}.\end{equation}
Determine the system stability for $K=5$ and $K=15$ by using the crossing concept, Nyquist and single-sheeted Nichols plots.
\end{example}

\begin{solution}
The given loop-function does not have unstable mode, then $N_p=0$. Fig. \ref{SubFig:Stable_MP_Ny_Ex1} shows the Nyquist diagram of $L(s)$ for $K=5$. The number of crossings on the ray $R_{(-\infty,-1)}$ is $N=0$, meaning that the Nyquist diagram does not encircle the critical point $(-1,0)$. Using (\ref{Nyquist_Criterion_Eq}), $N_z=N+N_p=0+0=0$ and therefore, the characteristic function $Q(s)$ does not have any zero in the RHP. This implies that the feedback system is stable for $K=5$. Fig. \ref{SubFig:Stable_MP_NC_Ex1} shows the single-sheeted Nichols plot of $L(s)$ for $K=5$. The number of crossings on the ray $R_{(-180^\circ,r>0dB)}$ is $N=0$, meaning that the equivalent Nyquist diagram in the complex plane does not encircle the critical point $(-1,0)$ and the feedback system is stable for $K=5$.

Similar method is employed for when $K=15$. Fig. \ref{SubFig:Stable_MP_Ny_Ex2} shows the Nyquist diagram of $L(s)$ for $K=15$. The number of crossings of $R_{(-\infty,-1)}$ is $N=2$, meaning that the Nyquist diagram encircles the critical point $(-1,0)$ two times in clockwise direction. By using (\ref{Nyquist_Criterion_Eq}), $N_z=N+N_p=2+0=2$ implying that the characteristics function $Q(s)$ has two zeros in the RHP. Then, the feedback system has two unstable poles with $K=15$. Fig. \ref{SubFig:Stable_MP_NC_Ex2} shows single-sheeted Nichols plot of $L(s)$ for $K=15$. The number of crossings of $R_{(-180^\circ,r>0dB)}$ is $N=2$, meaning that the corresponding Nyquist diagram of $L(s)$ encircles the critical point $(-1,0)$ two times in clockwise direction in the complex plane. $N \neq N_p$ thus, the feedback system is unstable for $K=15$.
\end{solution}

\begin{example}
\label{MP_Unstable_Ex} Consider the feedback system Fig. \ref{Fig:1DOF_standard_Feedback_structure} with an unstable, minimum
phase loop-function as given by:
\begin{equation} \label{Eq:Unstable_MP_NC_Ex}
L(s)=\frac{K(\frac{s}{3}+1)(\frac{s}{5}+1)}{(\frac{s}{2}-1)(\frac{s}{4}-1)}.
\end{equation}
Determine the system stability for $K=5$ and $K=1$ by using the crossing concept, Nyquist and single-sheeted Nichols plots.
\end{example}

\begin{solution}
At this example, the loop-function has two unstable poles, then $N_p=2$. Fig. \ref{SubFig:Unstable_MP_Ny_Ex1} shows the Nyquist
diagram of $L(s)$ for $K=5$. The number of crossings of $R_{(-\infty,-1)}$ is $N=-2$, meaning that the Nyquist diagram
encircles the critical point $(-1,0)$ two times in counterclockwise direction. Using (\ref{Nyquist_Criterion_Eq}), $N_z=N+N_p=-2+2=0$, implying that and the system characteristic function $Q(s)$ does not have any pole in the RHP. Then, the feedback system is stable for $K=5$. Fig. \ref{SubFig:Unstable_MP_NC_Ex1} shows the
single-sheeted Nichosl chart of $L(s)$ for $K=5$. The number of crossings of $R_{(-180^\circ,r>0dB)}$ is $N=-2$, meaning that the Nyquist diagram encircles the critical point $(-1,0)$ two times in counterclockwise direction. Since $N_z=-N_p$ then, the feedback system is stable for $K=5$.

Fig. \ref{SubFig:Unstable_MP_Ny_Ex2} shows the Nyquist diagram of $L(s)$ for $K=1$. The number of crossings of $R_{(-\infty,-1)}$ is $N=0$. Therefore, the Nyquist diagram does not encircle the critical point $(-1,0)$. By using (\ref{Nyquist_Criterion_Eq}), $N_z=N+N_p=0+2=2$, implying that the feedback system is unstable for $K=1$ with two RHP poles. Fig. \ref{SubFig:Unstable_MP_NC_Ex2} shows the Nyquist diagram of $L(s)$ for $K=1$. The number of crossings of $R_{(-\infty,-1)}$ is $N=0$, meaning that the Nyquist diagram does not encircle the critical point $(-1,0)$. Thus, $N\neq N_p$ and the feedback system is unstable for $K=1$.
\end{solution}

\begin{figure}
\centering
\psfrag{Im}[c][c]{\scriptsize  $Im\{L(s)\}$}
\psfrag{Re}{\scriptsize  $Re\{L(s)\}$}
\psfrag{w0}{\scriptsize$ \omega=0$}
\psfrag{win}{\scriptsize $\omega=\pm \infty$}
\psfrag{xin}{\scriptsize  $0$}
\psfrag{w0p}[r][r]{\scriptsize$\omega=0^{+}$}
\psfrag{w0m}{\scriptsize $\omega=0^{-}$}
\psfrag{wip}{\scriptsize $\omega=+\infty$}
\psfrag{wim}{\scriptsize $\omega=-\infty$}
\subfigure[K=5]{\label{SubFig:Unstable_MP_Ny_Ex1}
\psfrag{w0}{\scriptsize$ \omega=0$}
\psfrag{wc}[c][c]{\scriptsize $\omega=3.52$}
\psfrag{win}{\scriptsize $\omega=\infty$}
\psfrag{x0}{\scriptsize $5$}
\psfrag{xc}[c][c]{\scriptsize -3.52}
\psfrag{xin}{\scriptsize $2.67$}
\psfrag{R}{\scriptsize $\mathcal{R}_{(-\infty,-1)}$}
\includegraphics[scale=.5]{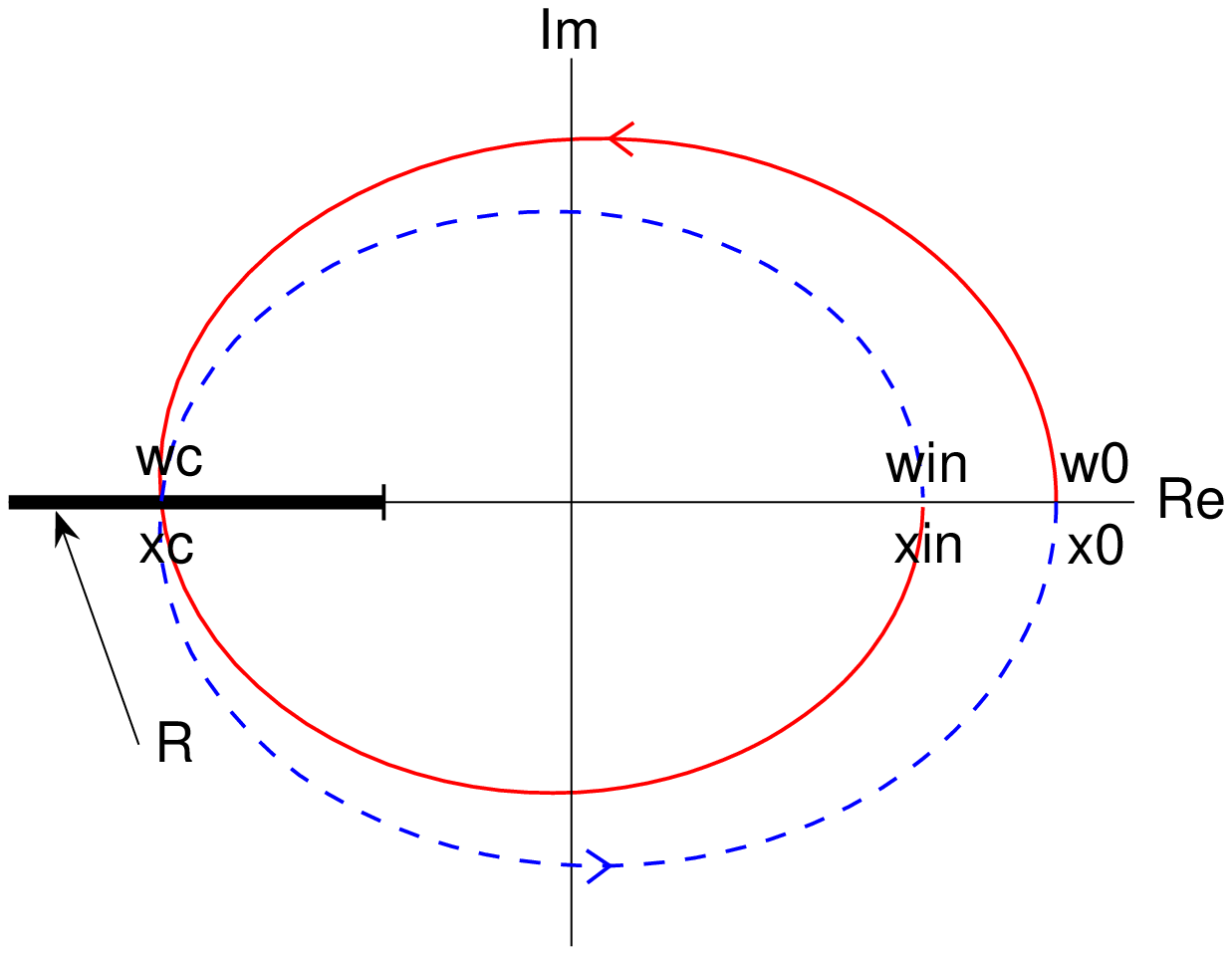}}
\hspace{1em}
\subfigure[K=5]{\label{SubFig:Unstable_MP_NC_Ex1}
\psfrag{R}{\scriptsize  $\mathcal{R}_{(-180^\circ,r>0dB)}$}
\psfrag{wc}{\scriptsize $\omega=1.16$}
\psfrag{-80}{\scriptsize $-\infty$}
\includegraphics[scale=.5]{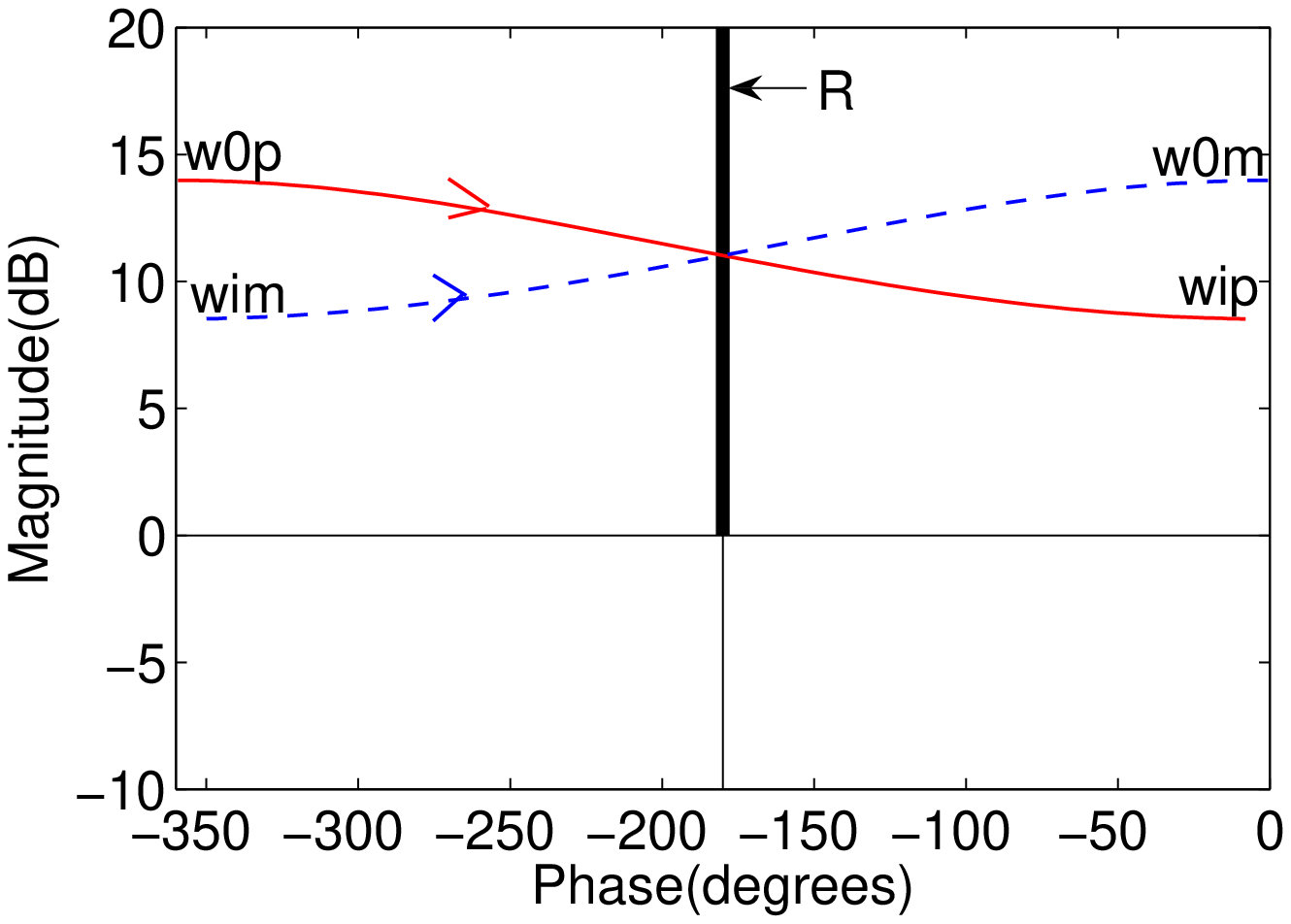}}
\\
\subfigure[K=1]{\label{SubFig:Unstable_MP_Ny_Ex2}
\psfrag{w0}{\scriptsize $\omega=0$}
\psfrag{wc}[c][c]{\scriptsize $\omega=3.5$}
\psfrag{win}{\scriptsize $\omega= \infty$}
\psfrag{x0}{\scriptsize $1$}
\psfrag{xc}[c][c]{\scriptsize -0.70}
\psfrag{xin}{\scriptsize $0.53$}
\psfrag{R}{\scriptsize $\mathcal{R}_{(-\infty,-1)}$}
\includegraphics[scale=.5]{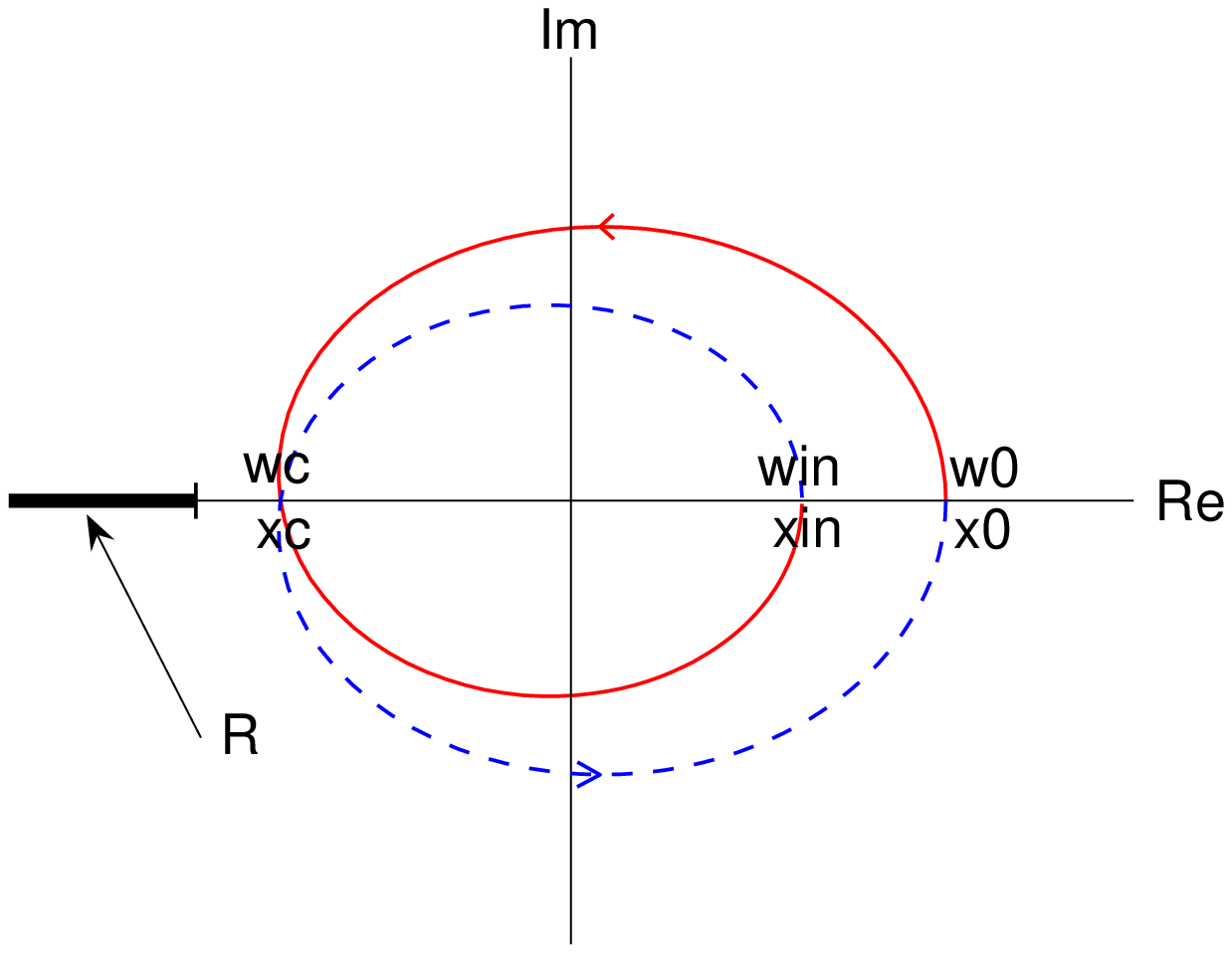}}
\hspace{1em}
\subfigure[K=1]{\label{SubFig:Unstable_MP_NC_Ex2}
\psfrag{R}{\scriptsize  $\mathcal{R}_{(-180^\circ,r>0dB)}$}
\psfrag{wc}{\scriptsize $\omega=1.16$}
\psfrag{-80}{\scriptsize $-\infty$}
\includegraphics[scale=.5]{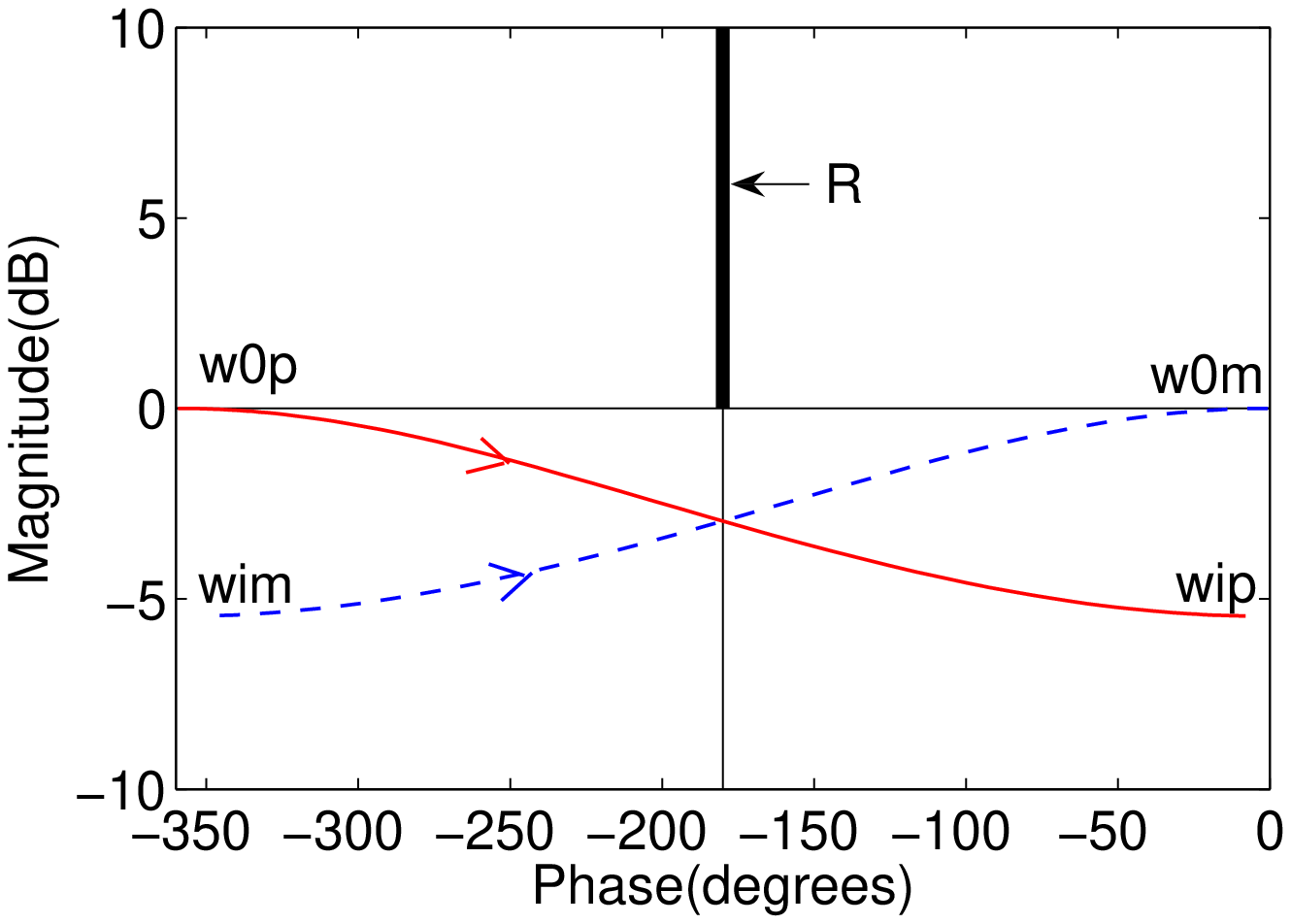}}
\caption{Nyquist plot of Example \ref{MP_Unstable_Ex}.}
     \label{Fig:Unstable_MP_Ny_Ex}
\end{figure}

\begin{example}
\label{NMP_Stable_Ny_Ex}Consider the feedback system Fig. \ref{Fig:1DOF_standard_Feedback_structure} with a stable but non-minimum phase loop-function as given by:
\begin{equation} \label{Eq:Stable_NMP_Ny_Ex}
L(s)=\frac{K(\frac{s}{0.5}-1)}{(\frac{s}{2}+1)(\frac{s}{3}+1)}\end{equation}
Determine the system stability for $K=0.5$ and $K=1.5$ by using the crossing concept, Nyquist and Nichols plots.
\end{example}
\begin{solution}
The given loop-function is stable, then $N_p=0$. According to the Nyquist diagram of $L(s)$ as shown Fig. \ref{SubFig:Stable_NMP_Ny_Ex1}, the number of crossings of $R_{(-\infty,-1)}$ is $0$ for $K=0.5$. It means that the Nyquist diagram does not encircle the critical point $(-1,0)$. By using (\ref{Nyquist_Criterion_Eq}), $N_z=N+N_p=0+0=0$, which implies the feedback system is stable for $K=0.5$. As Fig. \ref{SubFig:Stable_NMP_NC_Ex1} shows, the number of crossings of
$R_{(-180^\circ,r>0dB)}$ is $0$ for $K=0.5$ and then $N=0$. Since $N=-N_p=0$, the feedback system is stable for $K=0.5$.

Subsequently, Fig. \ref{SubFig:Stable_NMP_Ny_Ex2} shows the Nyquist diagram of $L(s)$ for $K=1.5$. The number of crossings of
$R_{(-\infty,-1)}$ is $N=+1$, meaning that the Nyquist diagram encircles the critical point $(-1,0)$ once in the clockwise
direction. By using (\ref{Nyquist_Criterion_Eq}), $N_z=N+N_p=1+0=1$, showing that the feedback system is unstable for $K=1.5$. As shown in Fig. \ref{SubFig:Stable_NMP_NC_Ex2}, the number of crossing of the ray $R_{(-180^\circ,r>0dB)}$ by the single-sheeted Nichols plot of $L(s)$ will be $N=1$. Thus, $N  \neq N_p$ and the feedback system is unstable for $K=1.5$.
\end{solution}

\begin{figure}
\centering
\psfrag{Im}[c][c]{\scriptsize  $Im\{L(s)\}$}
\psfrag{Re}{\scriptsize  $Re\{L(s)\}$}
\psfrag{w0}{\scriptsize$ \omega=0$}
\psfrag{win}{\scriptsize $\omega=\pm \infty$}
\psfrag{xin}{\scriptsize  $0$}
\psfrag{w0p}[r][r]{\scriptsize$\omega=0^{+}$}
\psfrag{w0m}{\scriptsize $\omega=0^{-}$}
\psfrag{wip}{\scriptsize $\omega=+\infty$}
\psfrag{wim}{\scriptsize $\omega=-\infty$}
\subfigure[K=0.5]{\label{SubFig:Stable_NMP_Ny_Ex1}
\psfrag{w0}{\scriptsize$ \omega=0$}
\psfrag{wc}[c][c]{\scriptsize $\omega=3.33$}
\psfrag{win}{\scriptsize $\omega=\infty$}
\psfrag{x0}{\scriptsize $-0.5$}
\psfrag{xc}[c][c]{\scriptsize 1.19}
\psfrag{xin}{\scriptsize $0$}
\psfrag{R}{\scriptsize $\mathcal{R}_{(-\infty,-1)}$}
\includegraphics[scale=.5]{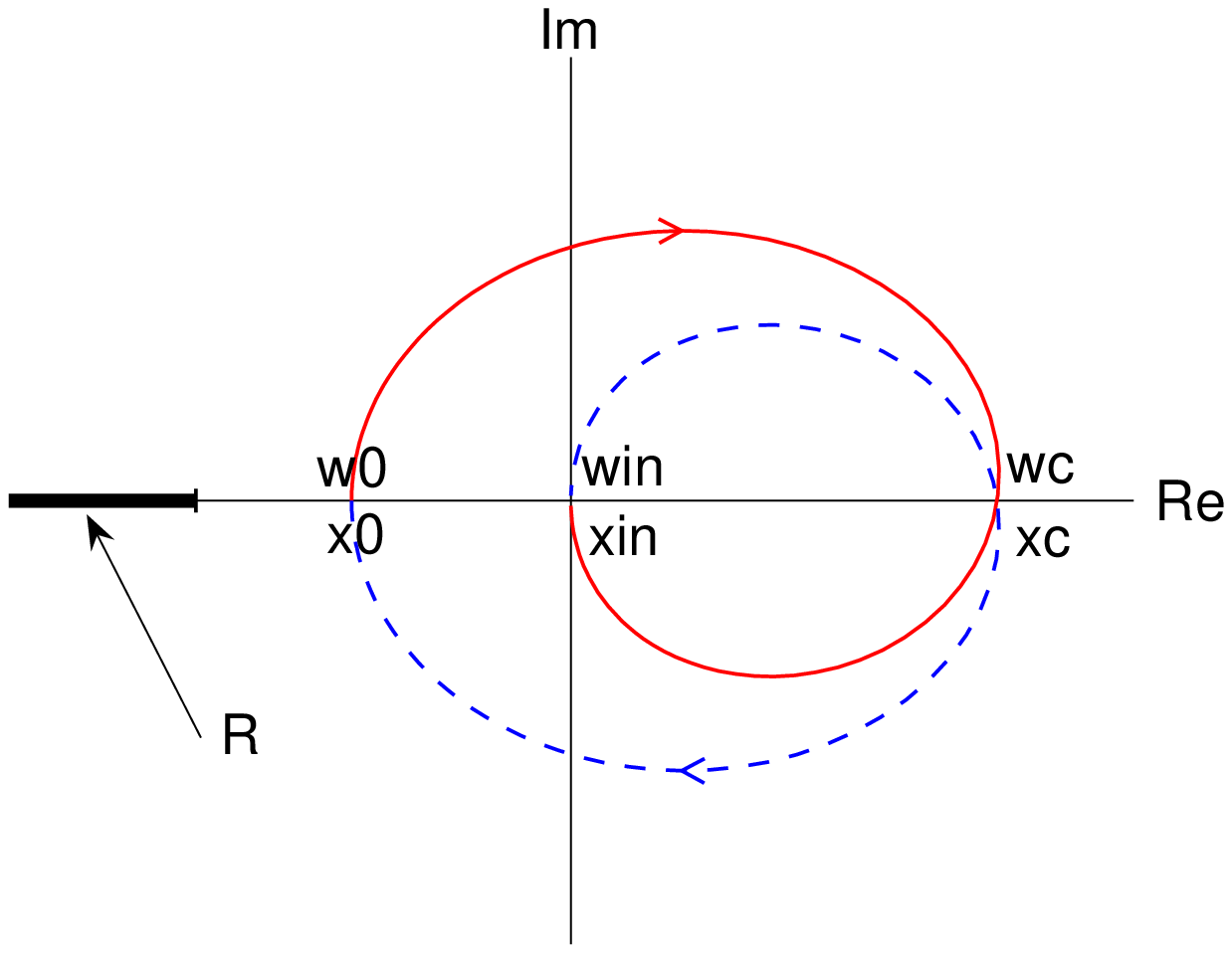}}
\hspace{1em}
\subfigure[K=0.5]{\label{SubFig:Stable_NMP_NC_Ex1}
\psfrag{R}{\scriptsize  $\mathcal{R}_{(-180^\circ,r>0dB)}$}
\psfrag{wc}{\scriptsize $\omega=1.16$}
\psfrag{-25}{\scriptsize $-\infty$}
\includegraphics[scale=.5]{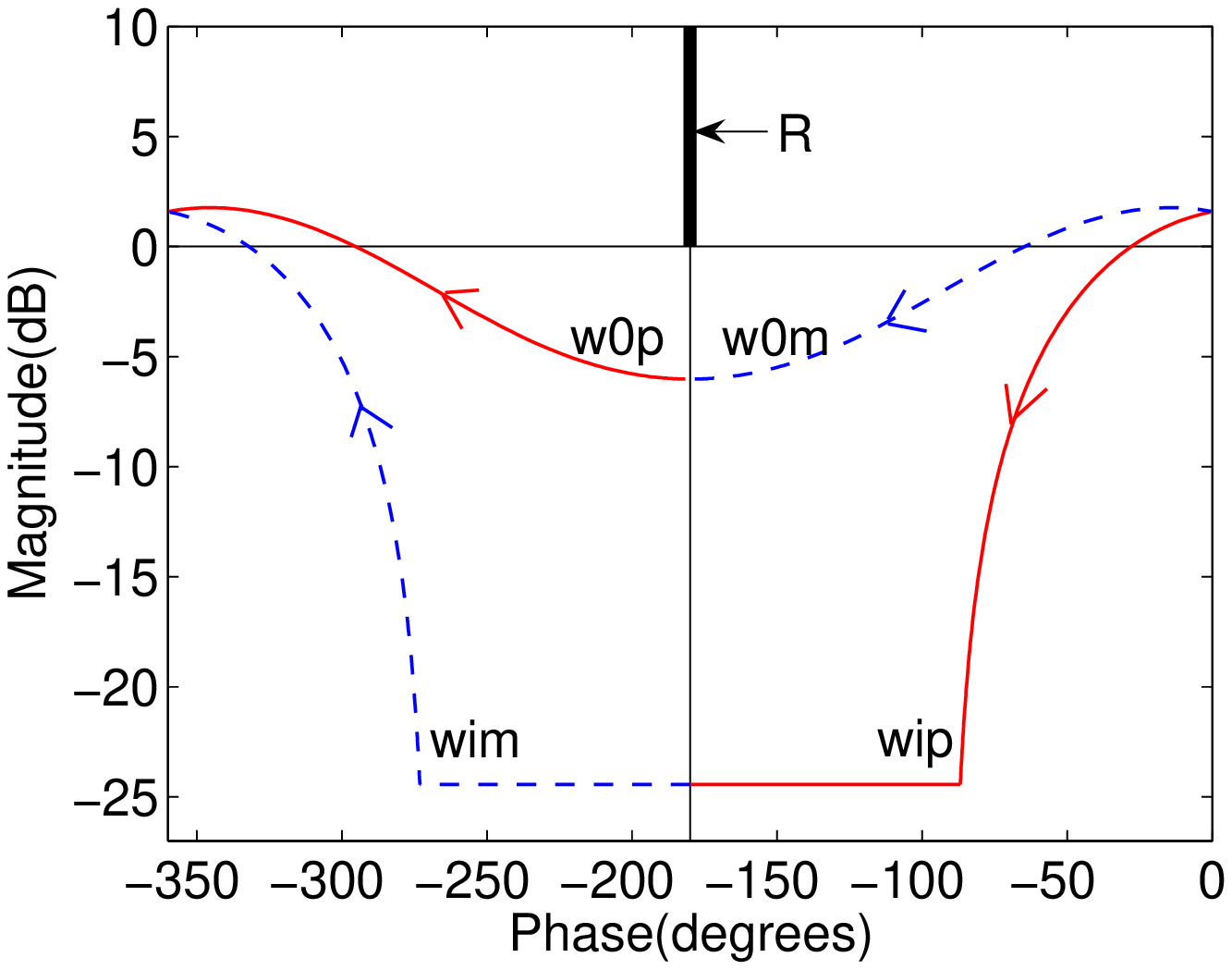}}\\
\subfigure[K=1.5]{\label{SubFig:Stable_NMP_Ny_Ex2}
\psfrag{w0}{\scriptsize $\omega=0$}
\psfrag{wc}[c][c]{\scriptsize $\omega=3.32$}
\psfrag{win}{\scriptsize $\omega= \infty$}
\psfrag{x0}[c][c]{\scriptsize $-1.5$}
\psfrag{xc}[c][c]{\scriptsize 3.58}
\psfrag{xin}{\scriptsize $0$}
\psfrag{R}{\scriptsize $\mathcal{R}_{(-\infty,-1)}$}
\includegraphics[scale=.5]{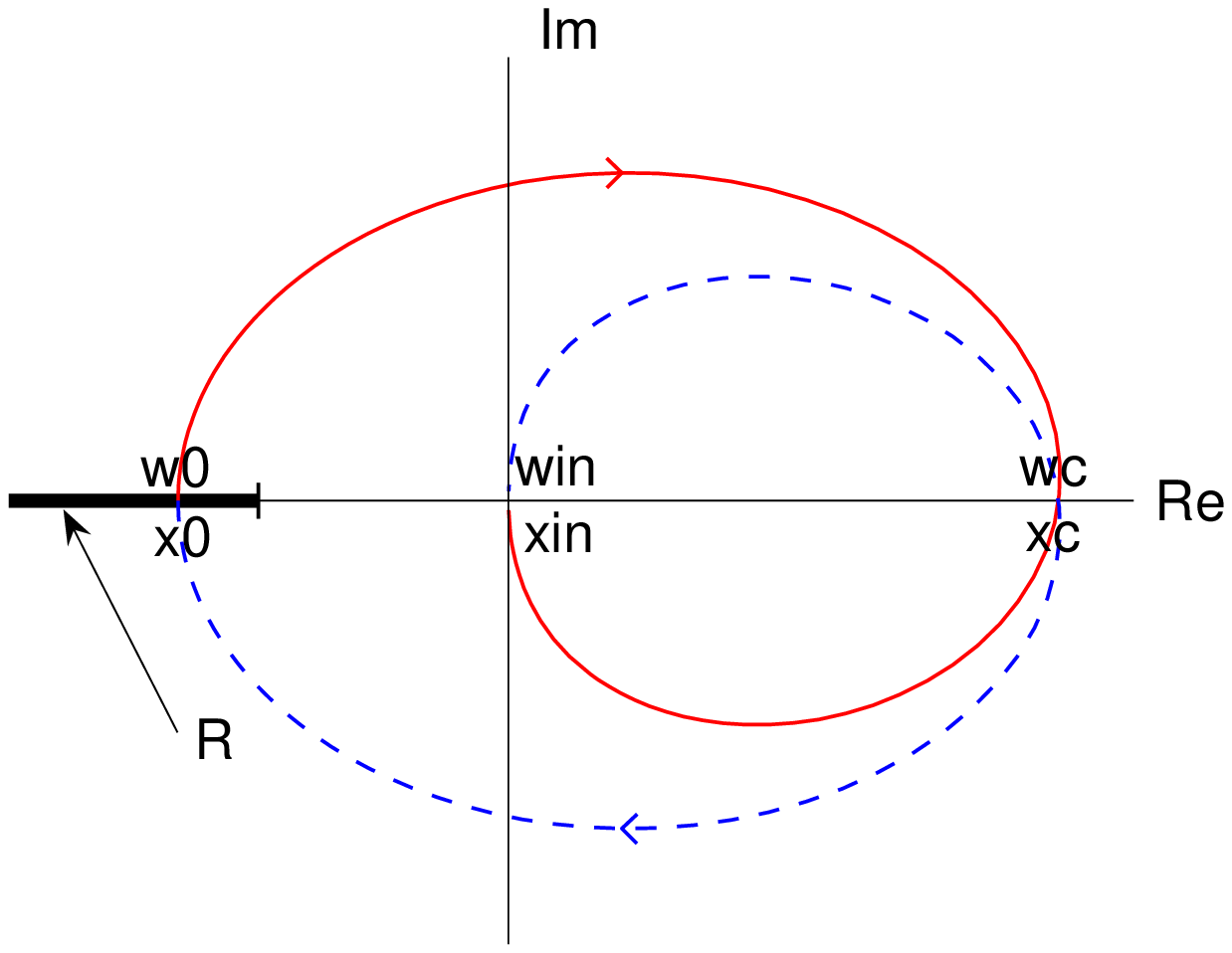}}
\hspace{1em}
\subfigure[K=1.5]{\label{SubFig:Stable_NMP_NC_Ex2}
\psfrag{wc}{\scriptsize $\omega=1.16$}
\psfrag{-20}{\scriptsize $-\infty$}
\psfrag{R}{\scriptsize  $\mathcal{R}_{(-180^\circ,r>0dB)}$}
\includegraphics[scale=.5]{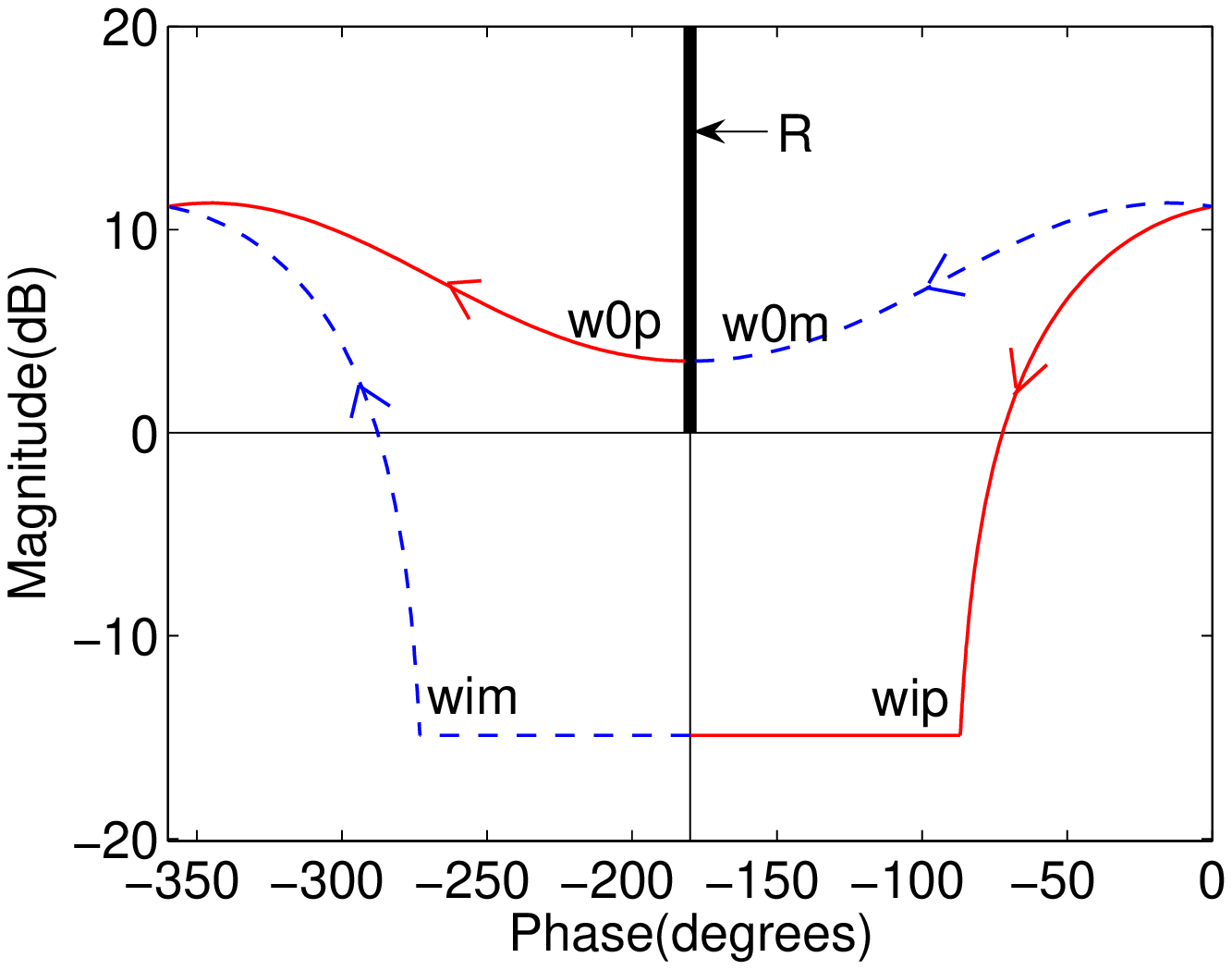}}
\caption{Nyquist plot of Example \ref{NMP_Stable_Ny_Ex}.}
\label{Fig:Stable_NMP_Ny_Ex}
\end{figure}


\begin{figure}
\centering
\psfrag{Im}[c][c]{\scriptsize  $Im\{L(s)\}$}
\psfrag{Re}{\scriptsize  $Re\{L(s)\}$}
\psfrag{w0}{\scriptsize$ \omega=0$}
\psfrag{win}{\scriptsize $\omega=\pm \infty$}
\psfrag{xin}{\scriptsize  $0$}
\psfrag{w0p}[r][r]{\scriptsize$\omega=0^{+}$}
\psfrag{w0m}{\scriptsize $\omega=0^{-}$}
\psfrag{wip}{\scriptsize $\omega=+\infty$}
\psfrag{wim}{\scriptsize $\omega=-\infty$}
\subfigure[K=0.5]{\label{SubFig:Unstable_NMP_Ny_Ex1}
\psfrag{w0}{\scriptsize$ \omega=0$}
\psfrag{wc}[c][c]{\scriptsize $\omega=2.05$}
\psfrag{win}{\scriptsize $\omega=\infty$}
\psfrag{x0}[c][c]{\scriptsize $-0.5$}
\psfrag{xc}[c][c]{\scriptsize -1.2}
\psfrag{xin}{\scriptsize $0$}
\psfrag{R}{\scriptsize $\mathcal{R}_{(-\infty,-1)}$}
\includegraphics[scale=.5]{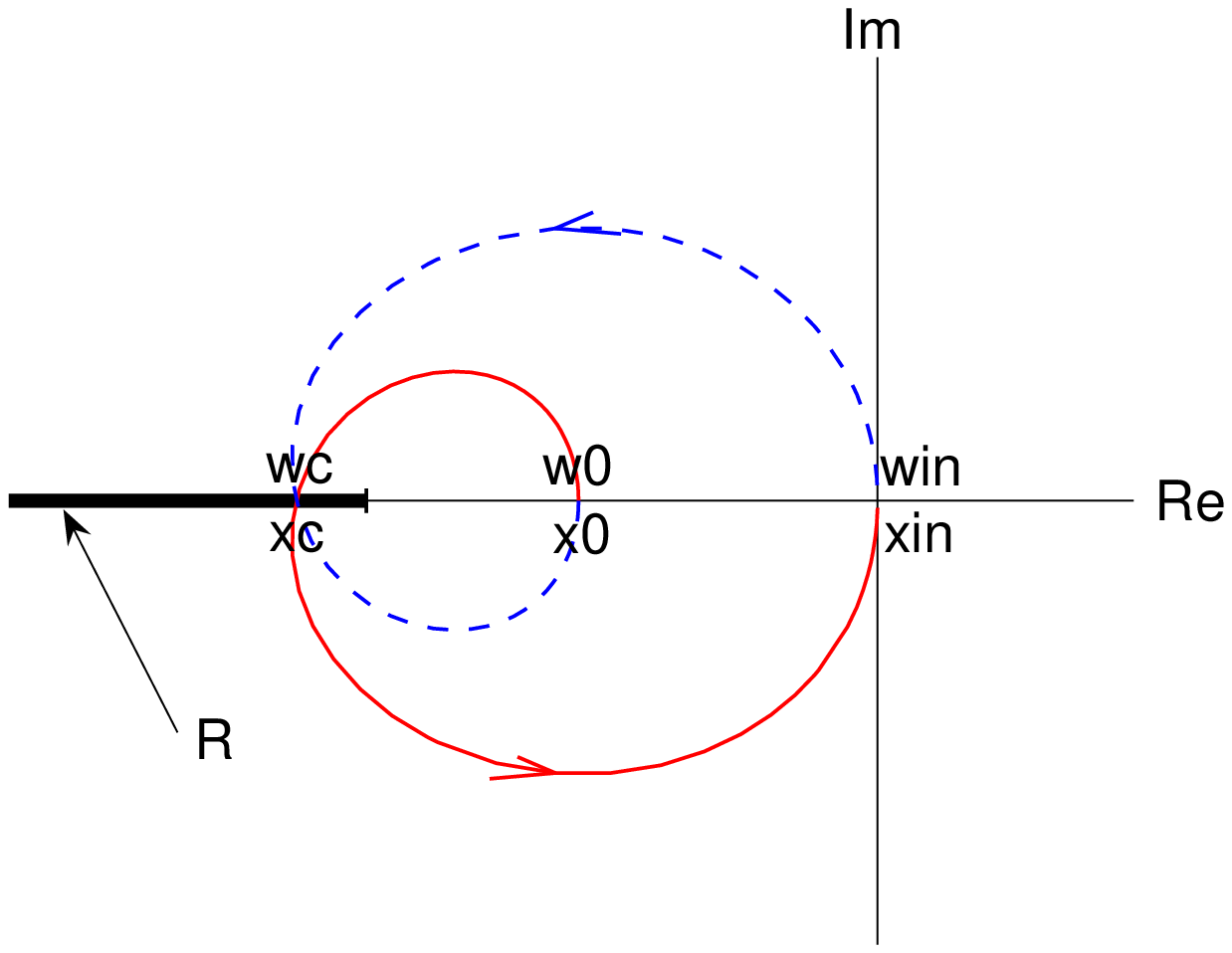}}
\hspace{1em}
\subfigure[K=0.5]{\label{SubFig:Unstable_NMP_NC_Ex1}
\psfrag{wc}{\scriptsize $\omega=1.16$}
\psfrag{-25}{\scriptsize $-\infty$}
\psfrag{R}{\scriptsize  $\mathcal{R}_{(-180^\circ,r>0dB)}$}
\includegraphics[scale=.5]{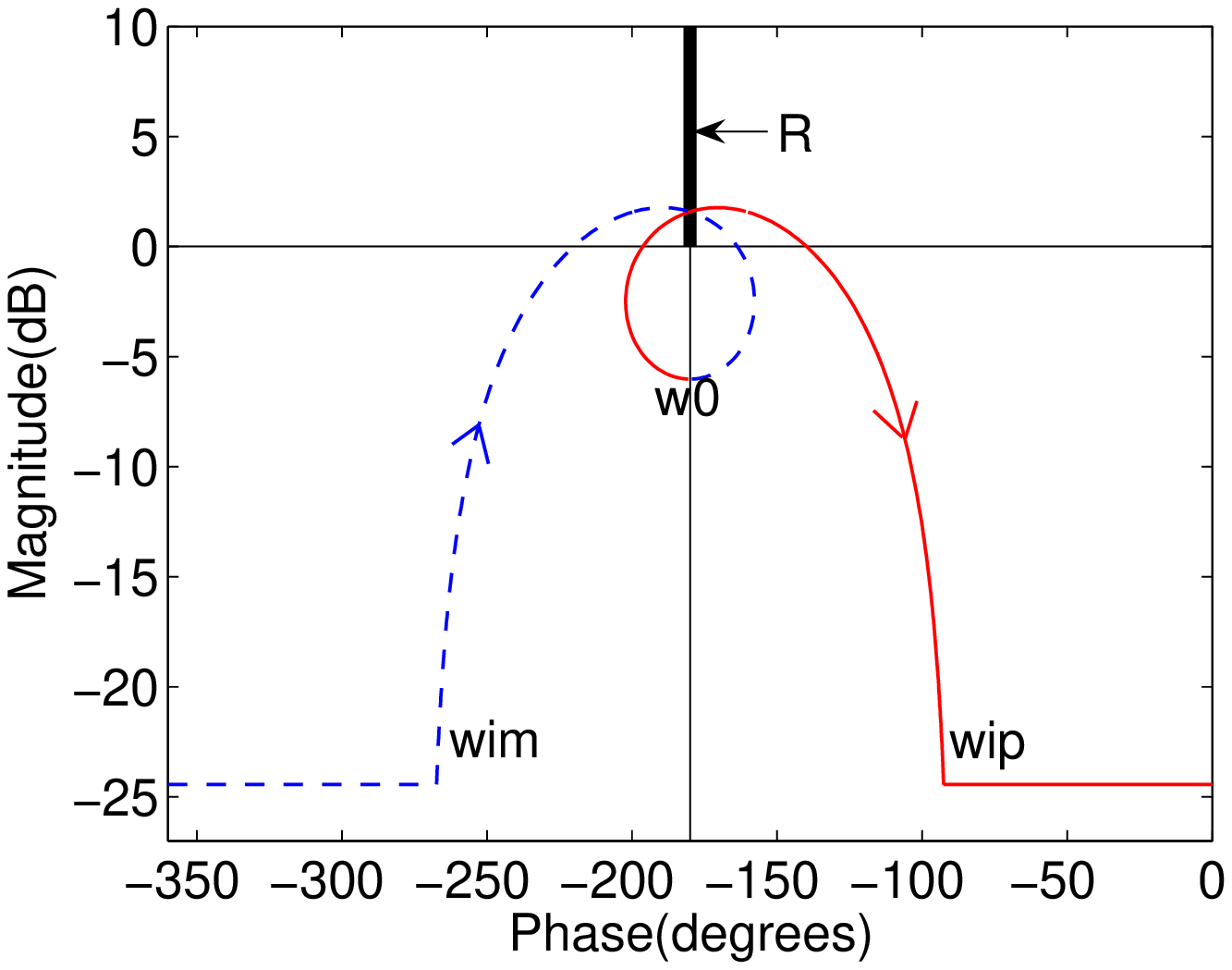}}
\\
\subfigure[K=1.5]{\label{SubFig:Unstable_NMP_Ny_Ex2}
\psfrag{w0}{\scriptsize $\omega=0$}
\psfrag{wc}[c][c]{\scriptsize $\omega=2.06$}
\psfrag{win}{\scriptsize $\omega= \infty$}
\psfrag{x0}[c][c]{\scriptsize $-1.5$}
\psfrag{xc}[c][c]{\scriptsize-3.58}
\psfrag{xin}{\scriptsize $0$}
\psfrag{R}{\scriptsize$\mathcal{R}_{(-\infty,-1)}$}
\includegraphics[scale=.5]{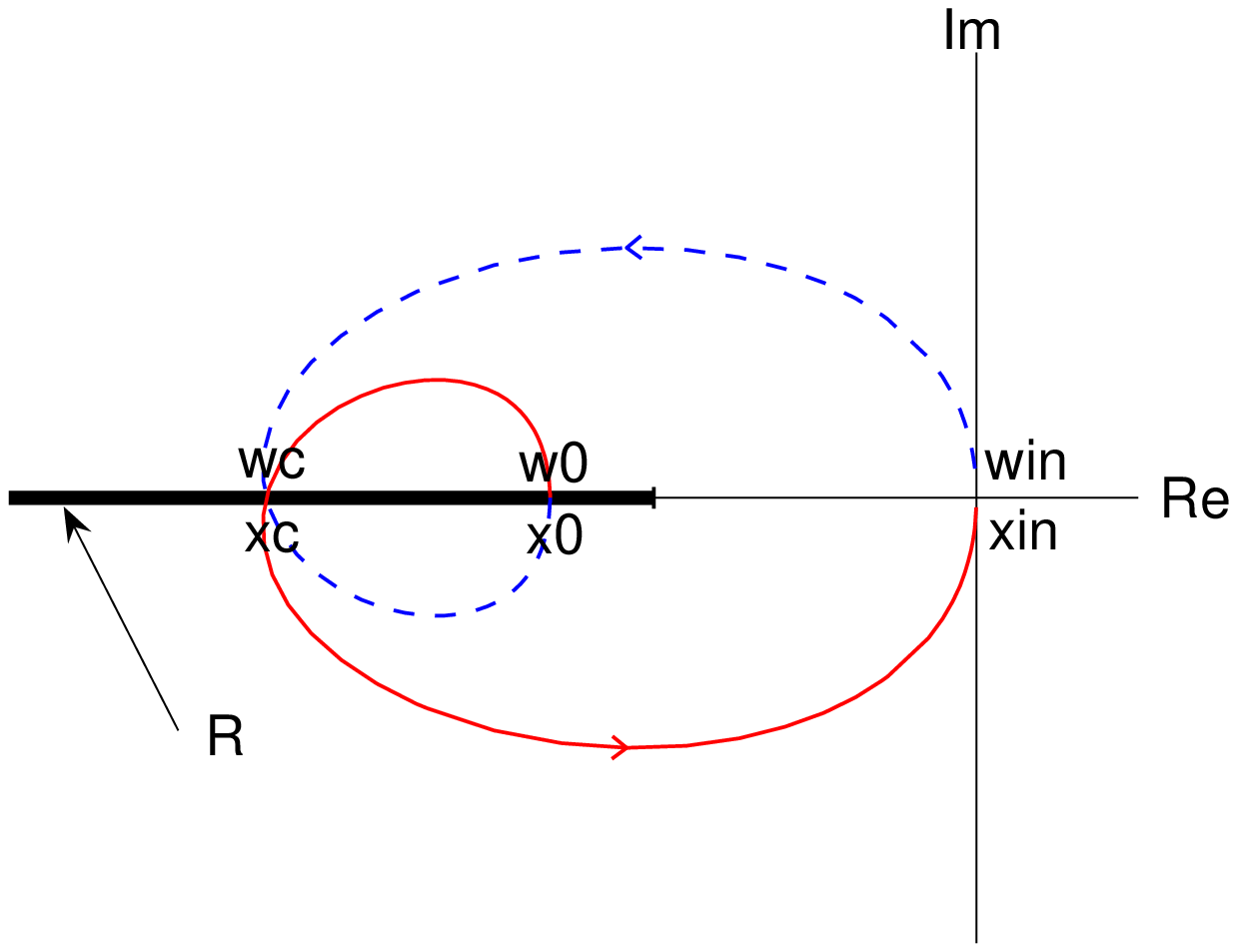}}
\hspace{1em}
\subfigure[K=1.5]{\label{SubFig:Unstable_NMP_NC_Ex2}
\psfrag{wc}{\scriptsize $\omega=1.16$}
\psfrag{-15}{\scriptsize$-\infty$}
\psfrag{R}{\scriptsize  $\mathcal{R}_{(-180^\circ,r>0dB)}$}
\includegraphics[scale=.5]{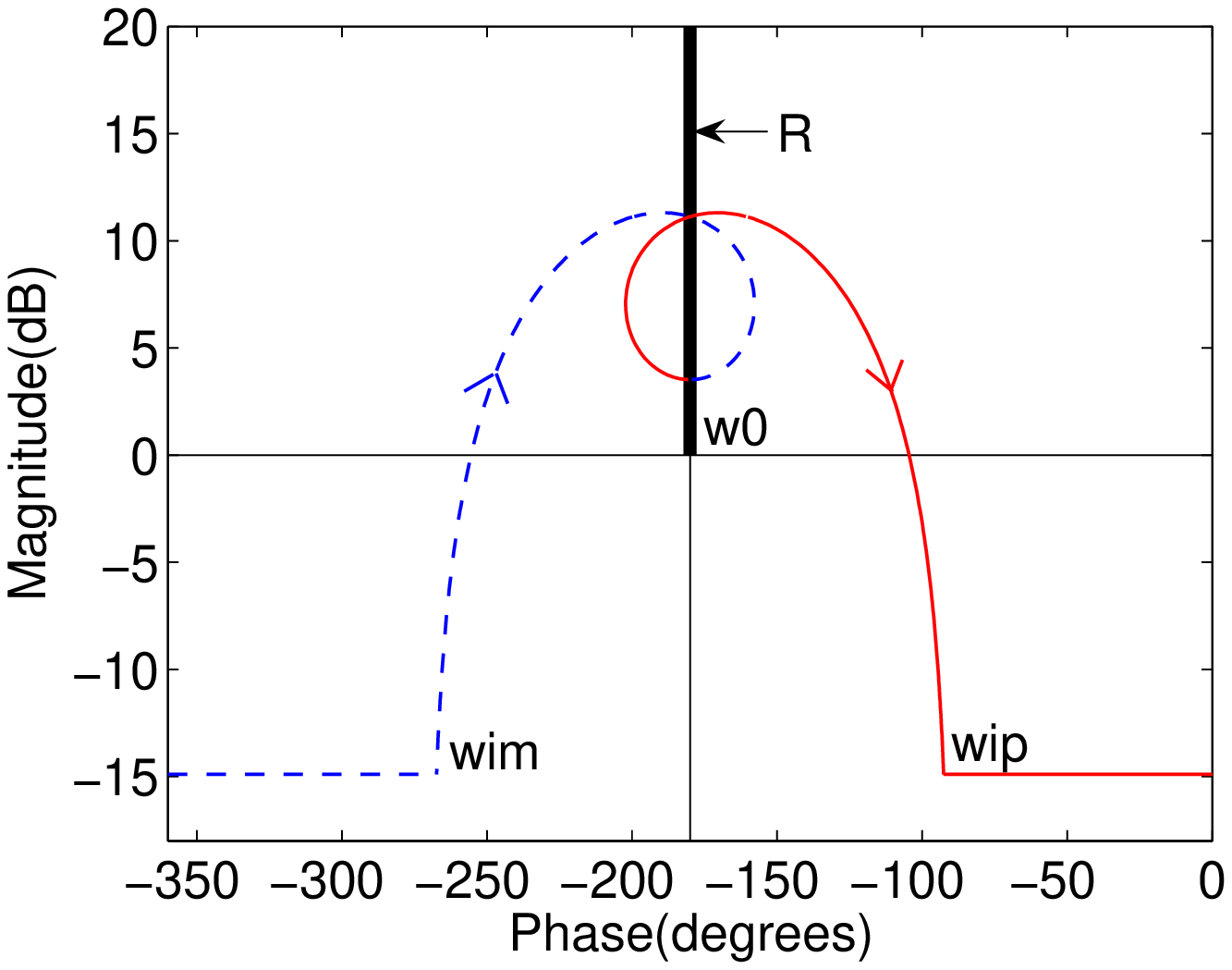}}
\caption{Nyquist plot of Example \ref{NMP_UnStable_Ny_Ex}.}
     \label{Fig:Unstable_NMP_Ny_Ex}
\end{figure}

\begin{example}
\label{NMP_UnStable_Ny_Ex}Consider the feedback system Fig. \ref{Fig:1DOF_standard_Feedback_structure} with an unstable and
nonminimum phase loop-function as given by:
\begin{equation} \label{Eq:Unstable_NMP_Ny_Ex}
L(s)=\frac{K(\frac{s}{0.5}-1)}{(\frac{s}{2}-1)(\frac{s}{3}-1)}\end{equation}
Determine the system stability for $K=0.5$ and $K=1.5$ by using the crossing concept, Nyquist and Nichols plots.
\end{example}
\begin{solution}
The given loop-function has two unstable poles, then $N_p=2$. The Nyquist diagram of $L(s)$ for $K=0.5$ is shown in Fig. \ref{SubFig:Unstable_NMP_Ny_Ex1}. The number of crossings of $R_{(-\infty,-1)}$ is $-2$. It means that the Nyquist diagram encircles the critical point $(-1,0)$ two times in counterclockwise direction. Using (\ref{Nyquist_Criterion_Eq}), $N_z=N+N_p=-2+2=0$ which implies the feedback system does not have any pole at the RHP for $K=0.5$ and is therefore stable. The single-sheeted Nichols plot of $L(s)$ shows two positive crossings that occur on $R_{(-180^\circ,r>0dB)}$, for $K=0.5$, that is $N=-2$. Thus, $N=-N_p$ and the feedback system is stable for $K=0.5$.

Fig. \ref{SubFig:Unstable_NMP_Ny_Ex2} shows the Nyquist diagram of $L(s)$ for $K=1.5$. At this example, there are three crossings of $R_{(-\infty,-1)}$, one positive and two negative, so $N=-1$. It means that the Nyquist diagram encircles the critical point $(-1,0)$ once in the counterclockwise direction. Again, using (\ref{Nyquist_Criterion_Eq}), $N_z=N+N_p=-1+2=1$, saying that the feedback system has one RHP pole and them is unstable for $K=1.5$. Fig. \ref{SubFig:Unstable_NMP_NC_Ex2} shows the single-sheeted Nichols plot of $L(s)$ for $K=1.5$. At this example, there are
three crossings of $R_{(-\infty,-1)}$, two negative and one positive which lead to $N=+1$. Since $N \neq N_p$, then the feedback system is unstable for $K=1.5$.
\end{solution}

\begin{figure}
\centering
\psfrag{Im}[c][c]{\scriptsize  $Im\{L(s)\}$}
\psfrag{Re}{\scriptsize  $Re\{L(s)\}$}
\psfrag{w0}{\scriptsize$ \omega=0$}
\psfrag{win}{\scriptsize $\omega=\pm \infty$}
\psfrag{xin}{\scriptsize  $0$}
\psfrag{w0p}[r][r]{\scriptsize$\omega=0^{+}$}
\psfrag{w0m}{\scriptsize $\omega=0^{-}$}
\psfrag{wip}{\scriptsize $\omega=+\infty$}
\psfrag{wim}{\scriptsize $\omega=-\infty$}
\subfigure[K=1]{\label{SubFig:Integral_Stable_MP_Ny_Ex1}
\psfrag{w0}{\scriptsize$ \omega=0$}
\psfrag{wc}[c][c]{\scriptsize $\omega=1.16$}
\psfrag{win}{\scriptsize $\omega=\infty$}
\psfrag{x0}{\scriptsize $Re\{L(s)\}=-2.5$}
\psfrag{xc}[c][c]{\scriptsize -0.39}
\psfrag{xin}{\scriptsize$0$}
\psfrag{inf}{\scriptsize $\infty$}
\psfrag{R}{\scriptsize $\mathcal{R}_{(-\infty,-1)}$}
\includegraphics[scale=.5]{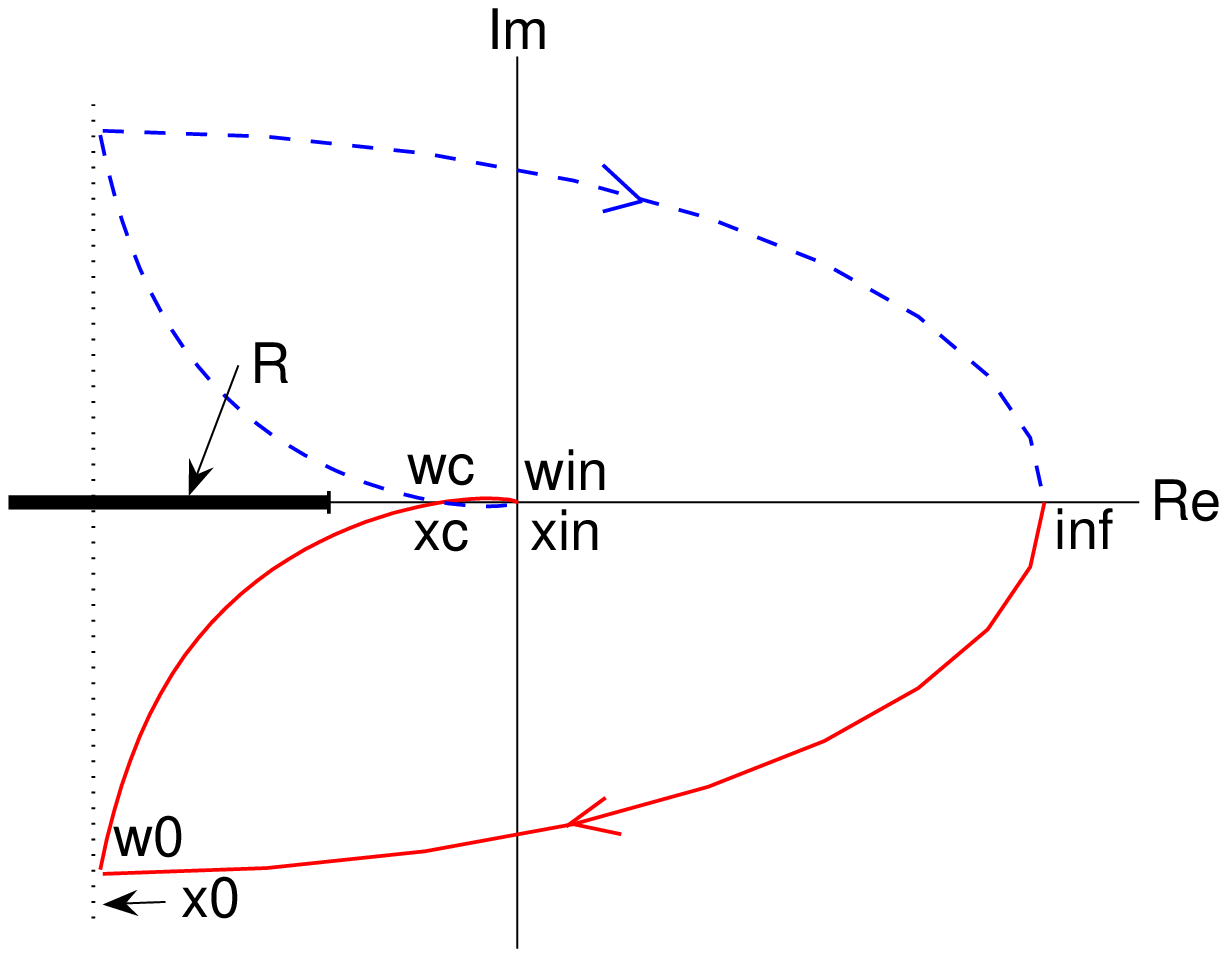}}
\hspace{1em}
\subfigure[K=1]{\label{SubFig:Integral_Stable_MP_NC_Ex1}
\psfrag{wc}{\scriptsize $\omega=1.16$}
\psfrag{-80}{\scriptsize $-\infty$}
\psfrag{R}{\scriptsize  $\mathcal{R}_{(-180^\circ,r>0dB)}$}
\includegraphics[scale=.5]{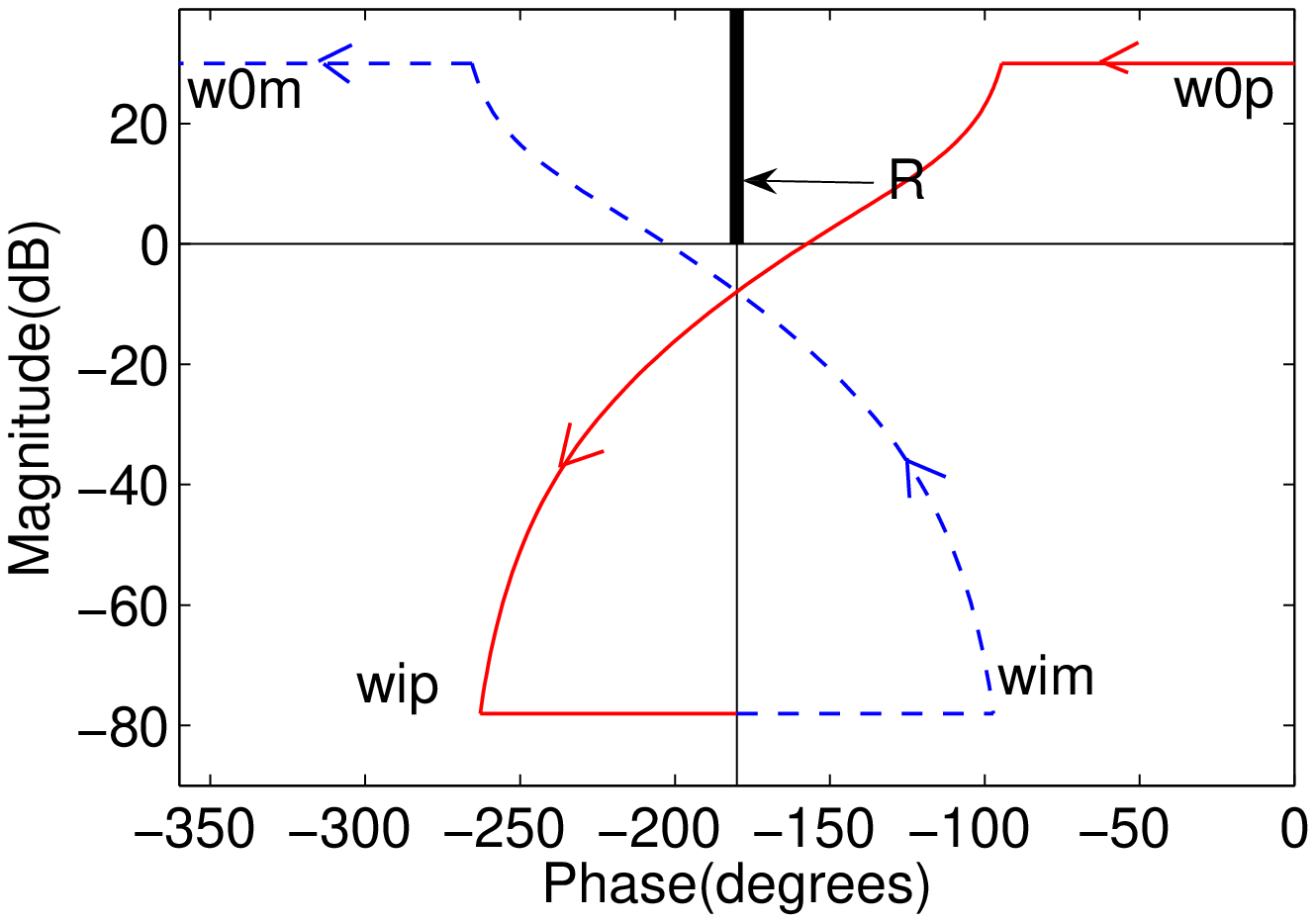}}
\\
\subfigure[K=5]{\label{SubFig:Integral_Stable_MP_Ny_Ex2}
\psfrag{w0}{\scriptsize$ \omega=0$}
\psfrag{wc}[c][c]{\scriptsize $\omega=1.15$}
\psfrag{win}{\scriptsize $\omega=\infty$}
\psfrag{x0}{\scriptsize $Re\{L(s)\}=-12.5$}
\psfrag{xc}[c][c]{\scriptsize -1.95}
\psfrag{xin}{\scriptsize $0$}
\psfrag{inf}{\scriptsize$\infty$}
\psfrag{R}{\scriptsize $\mathcal{R}_{(-\infty,-1)}$}
\includegraphics[scale=.5]{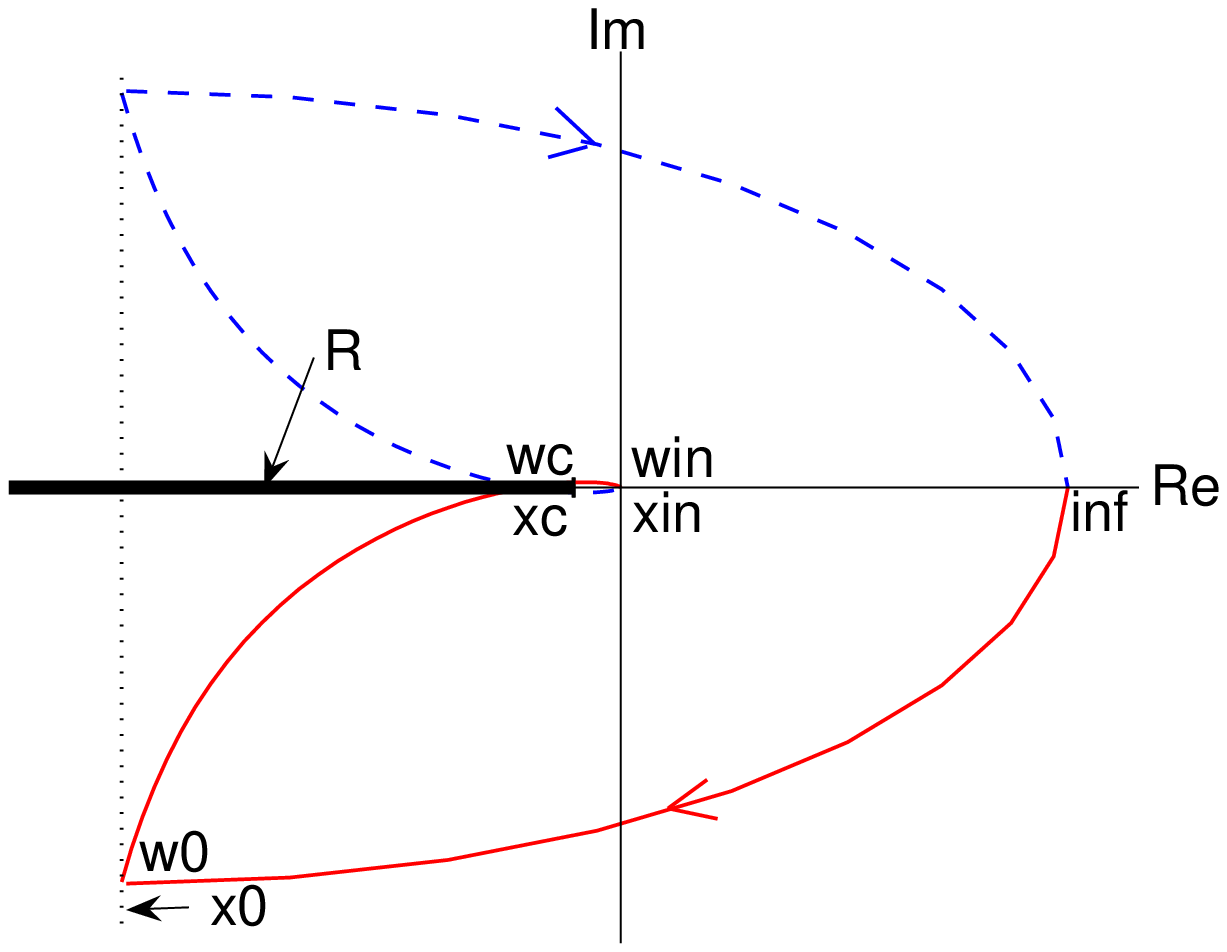}}
\hspace{1em}
\subfigure[K=5]{\label{SubFig:Intehral_Stable_MP_NC_Ex2}
\psfrag{R}{\scriptsize  $\mathcal{R}_{(-180^\circ,r>0dB)}$}
\psfrag{wc}{\scriptsize $\omega=1.16$}
\psfrag{-60}{\scriptsize $-\infty$}
\psfrag{40}[r][r]{\scriptsize $+\infty$}
\includegraphics[scale=.5]{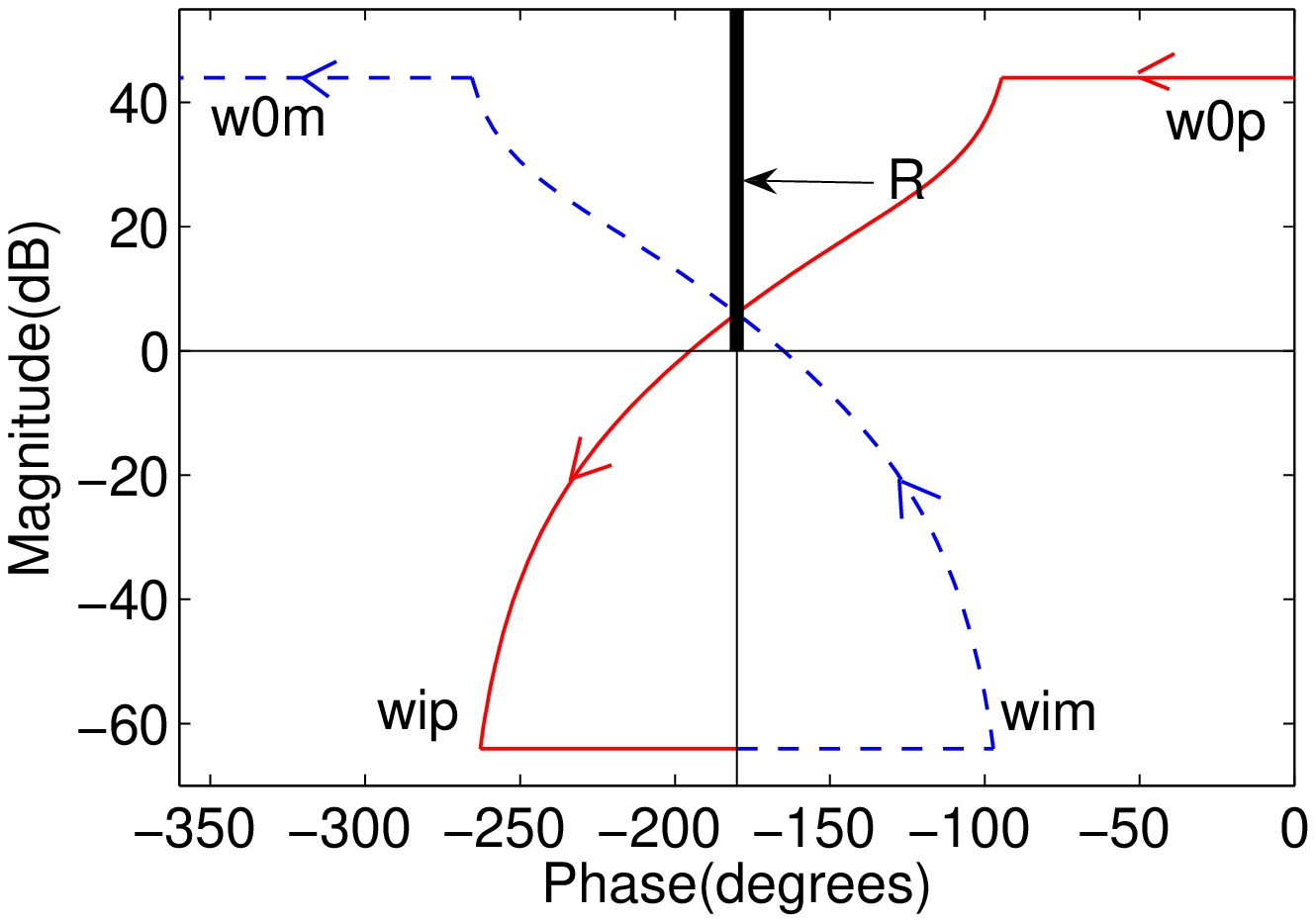}}
\caption{Nyquist plot of Example \ref{Integral_MP_Stable_Ny_Ex}.}
     \label{Fig:Integral_Stable_MP_Ny_Ex}
\end{figure}

\begin{example}
\label{Integral_MP_Stable_Ny_Ex} The use of integrator in the control structure is necessary if tracking is one of the desired objectives. It makes the tracking error to be zero. In this example, a feedback system with integrator is considered. Let $L(s)$ in feedback system Fig. \ref{Fig:1DOF_standard_Feedback_structure} be as follows:
\begin{equation} \label{Eq:Integral_Stable_MP_Ny_Ex}
L(s)=\frac{K}{s(\frac{s}{0.5}+1)(\frac{s}{2}+1)}\end{equation}
Using the crossing concept, determine the system stability for $K=1$ and $K=5$.
\end{example}
\begin{solution}
The loop-function has a pole at the origin. As mentioned earlier, to plot the Nyquist diagram which is the map of the standard Nyquist contour through $L(s)$ into the complex plan, $\Gamma$ must detour around the poles on the imaginary axis. Thus, $N_p=0$ as $L(s)$ does not have any RHP pole with the real part greater than zero. According to the Nyquist diagram of $L(s)$ as shown Fig. \ref{SubFig:Integral_Stable_MP_Ny_Ex1}, the number of crossings of $R_{(-\infty,-1)}$ is $0$ for $K=1$. It means that the Nyquist diagram does not encircle the critical point $(-1,0)$. Using (\ref{Nyquist_Criterion_Eq}), $N_z=N+N_p=0+0=0$ which implies the feedback system is stable for $K=1$. The single-sheeted Nichols plot of $L(s)$, as shown Fig.
\ref{SubFig:Integral_Stable_MP_NC_Ex1}, demonstrates that there is no crossing and $N=0$. Then, $N \neq -N_p=0$ and the feedback system is stable for $K=1$.

Subsequently, Fig. \ref{SubFig:Integral_Stable_MP_Ny_Ex2} shows the Nyquist diagram of $L(s)$ for $K=5$. The number of crossings of $R_{(-\infty,-1)}$ is $N=+2$, meaning that the Nyquist diagram encircles the critical point $(-1,0)$ once in the clockwise
direction. By using (\ref{Nyquist_Criterion_Eq}), $N_z=N+N_p=2+0=2$, saying that the feedback system has two unstable poles and is
unstable for $K=5$. As it is shown by Fig. \ref{SubFig:Intehral_Stable_MP_NC_Ex2}, the sum of crossing is $N=2$ for $K=5$. Since $N \neq -N_p$, then the feedback system is unstable for $K=5$.
\end{solution}

\begin{figure}
\centering
\psfrag{Im}[c][c]{\scriptsize  $Im\{L(s)\}$}
\psfrag{Re}{\scriptsize  $Re\{L(s)\}$}
\psfrag{w0}{\scriptsize$ \omega=0$}
\psfrag{win}{\scriptsize $\omega=\pm \infty$}
\psfrag{xin}{\scriptsize  $0$}
\psfrag{w0p}[r][r]{\scriptsize$\omega=0^{+}$}
\psfrag{w0m}{\scriptsize $\omega=0^{-}$}
\psfrag{wip}{\scriptsize $\omega=+\infty$}
\psfrag{wim}{\scriptsize $\omega=-\infty$}
\subfigure[K=-1]{\label{SubFig:Integral_Stable_NMP_Ny_Ex1}
\psfrag{w0}{\scriptsize$ \omega=0$}
\psfrag{wc}[c][c]{\scriptsize $\omega=1.62$}
\psfrag{win}{\scriptsize $\omega=\infty$}
\psfrag{x0}{\scriptsize $Re\{L(s)\}=-1.5$}
\psfrag{xc}[c][c]{\scriptsize -0.5}
\psfrag{xin}{\scriptsize $0$}
\psfrag{inf}{\scriptsize $\infty$}
\psfrag{R}{\scriptsize $\mathcal{R}_{(-\infty,-1)}$}
\includegraphics[scale=.5]{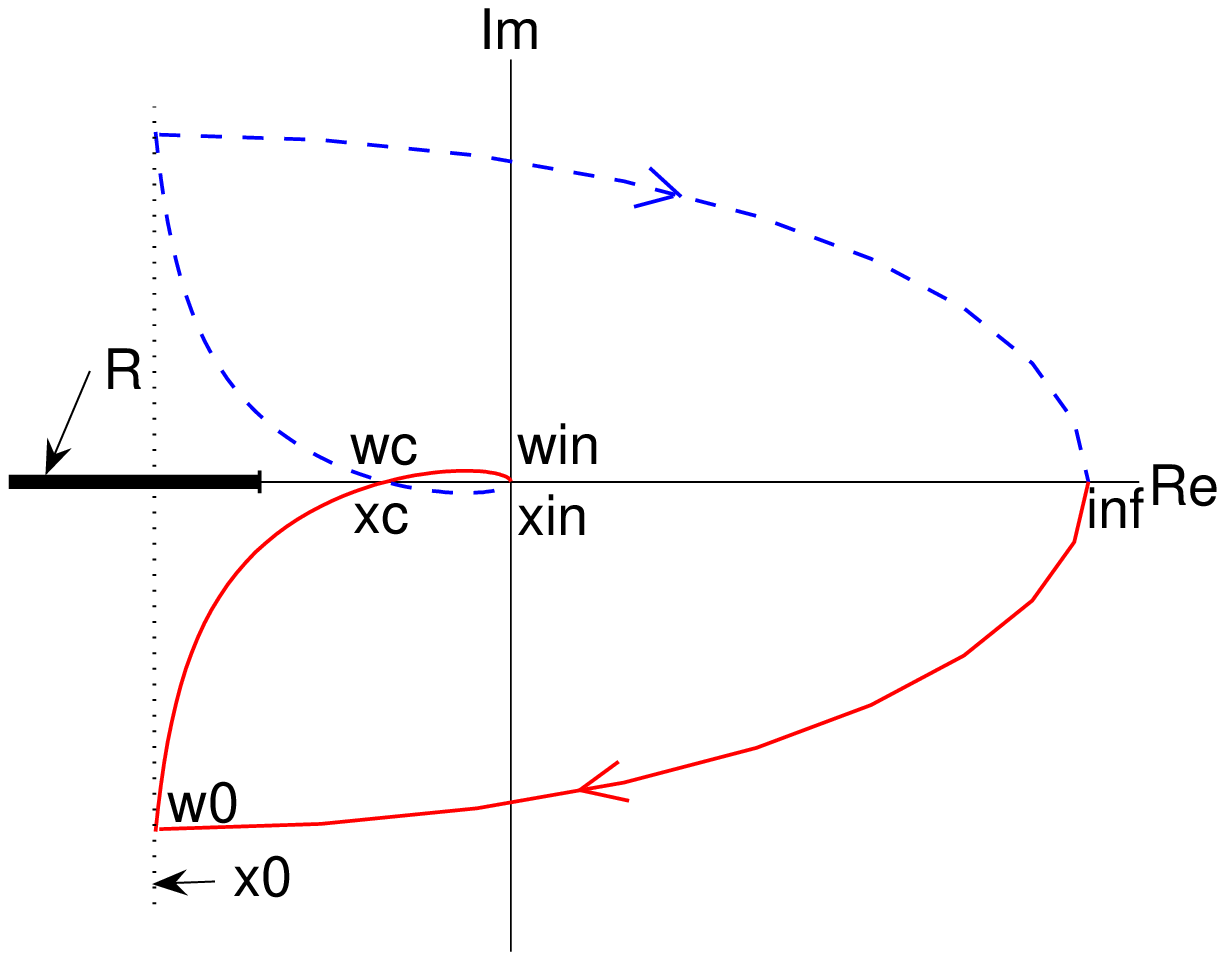}}
\hspace{1em}
\subfigure[K=-1]{\label{SubFig:Integral_Stable_NMP_NC_Ex1}
\psfrag{R}{\scriptsize  $\mathcal{R}_{(-180^\circ,r>0dB)}$}
\psfrag{wc}{\scriptsize $\omega=1.16$}
\psfrag{-40}{\scriptsize $-\infty$}
\includegraphics[scale=.5]{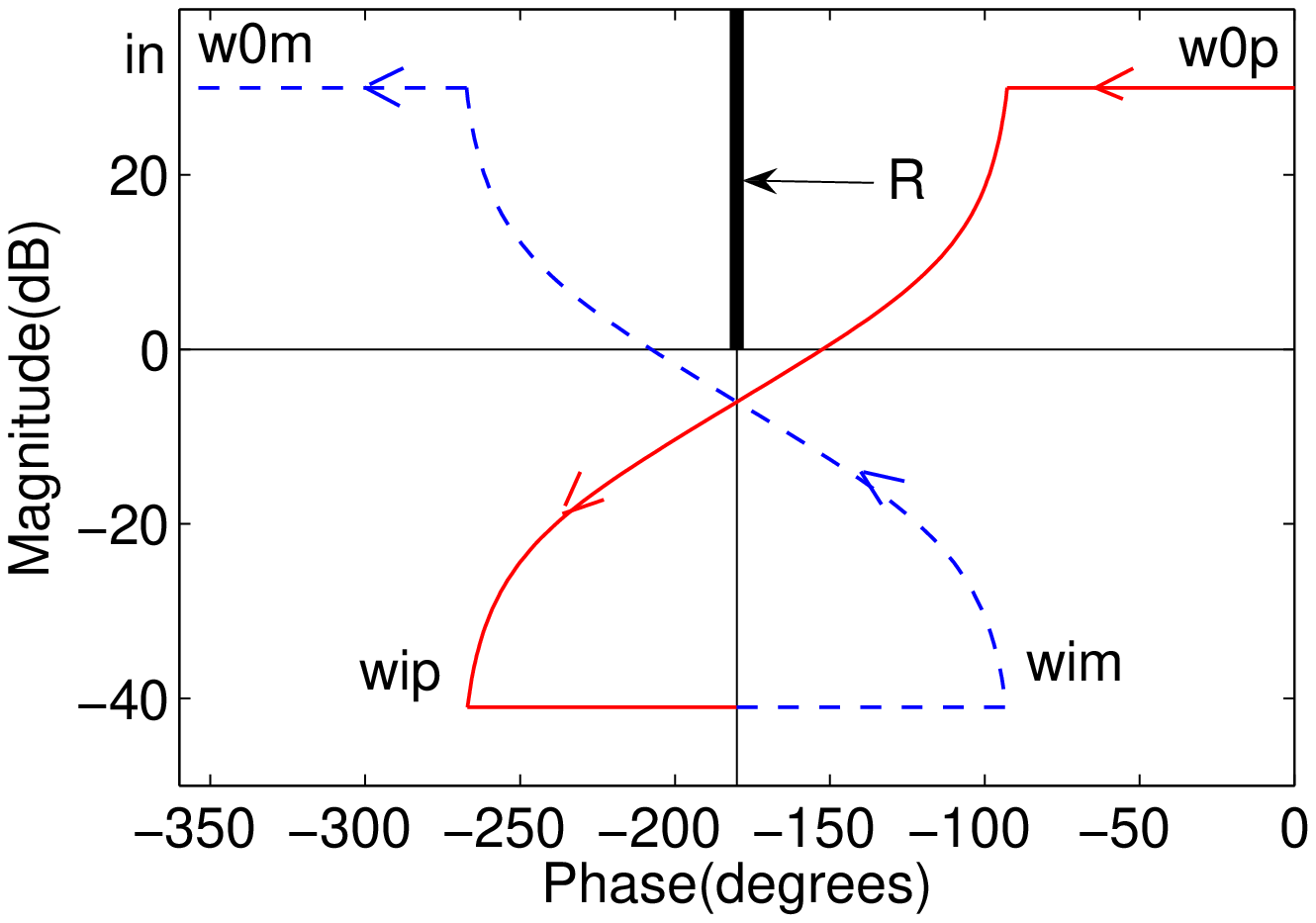}}
\\
\subfigure[K=-5]{\label{SubFig:Integral_Stable_NMP_Ny_Ex2}
\psfrag{w0}{\scriptsize$ \omega=0$}
\psfrag{wc}[c][c]{\scriptsize $\omega=1.42$}
\psfrag{win}{\scriptsize $\omega=\infty$}
\psfrag{x0}{\scriptsize $Re\{L(s)\}=-7.5$}
\psfrag{xc}[c][c]{\scriptsize -2.5}
\psfrag{xin}{\scriptsize $0$}
\psfrag{inf}{\scriptsize $\infty$}
\psfrag{R}{$\mathcal{R}_{(-\infty,-1)}$}
\includegraphics[scale=.5]{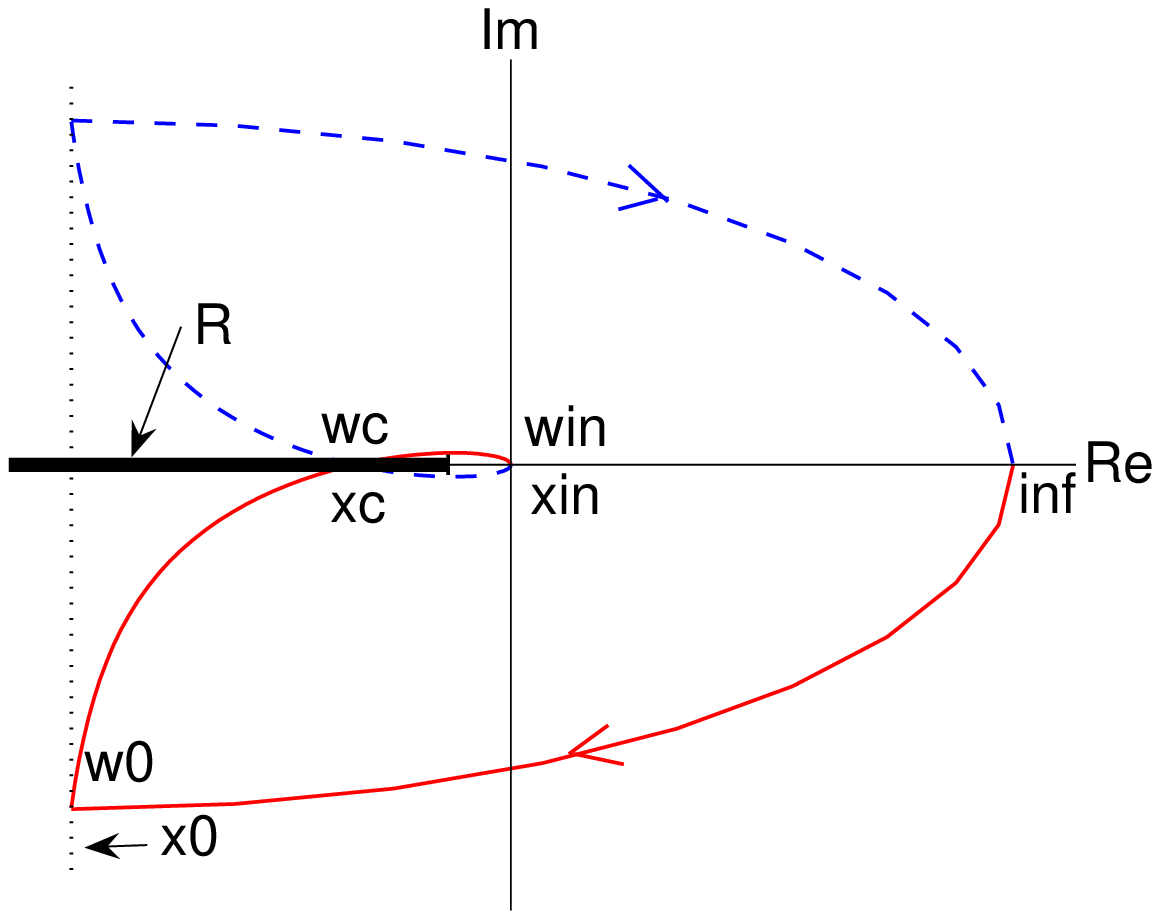}}
\hspace{1em}
\subfigure[K=-5]{\label{SubFig:Intehral_Stable_NMP_NC_Ex2}
\psfrag{R}{\scriptsize  $\mathcal{R}_{(-180^\circ,r>0dB)}$}
\psfrag{wc}{\scriptsize $\omega=1.16$}
\psfrag{-40}{\scriptsize $-\infty$}
\psfrag{60}[r][r]{}
\includegraphics[scale=.5]{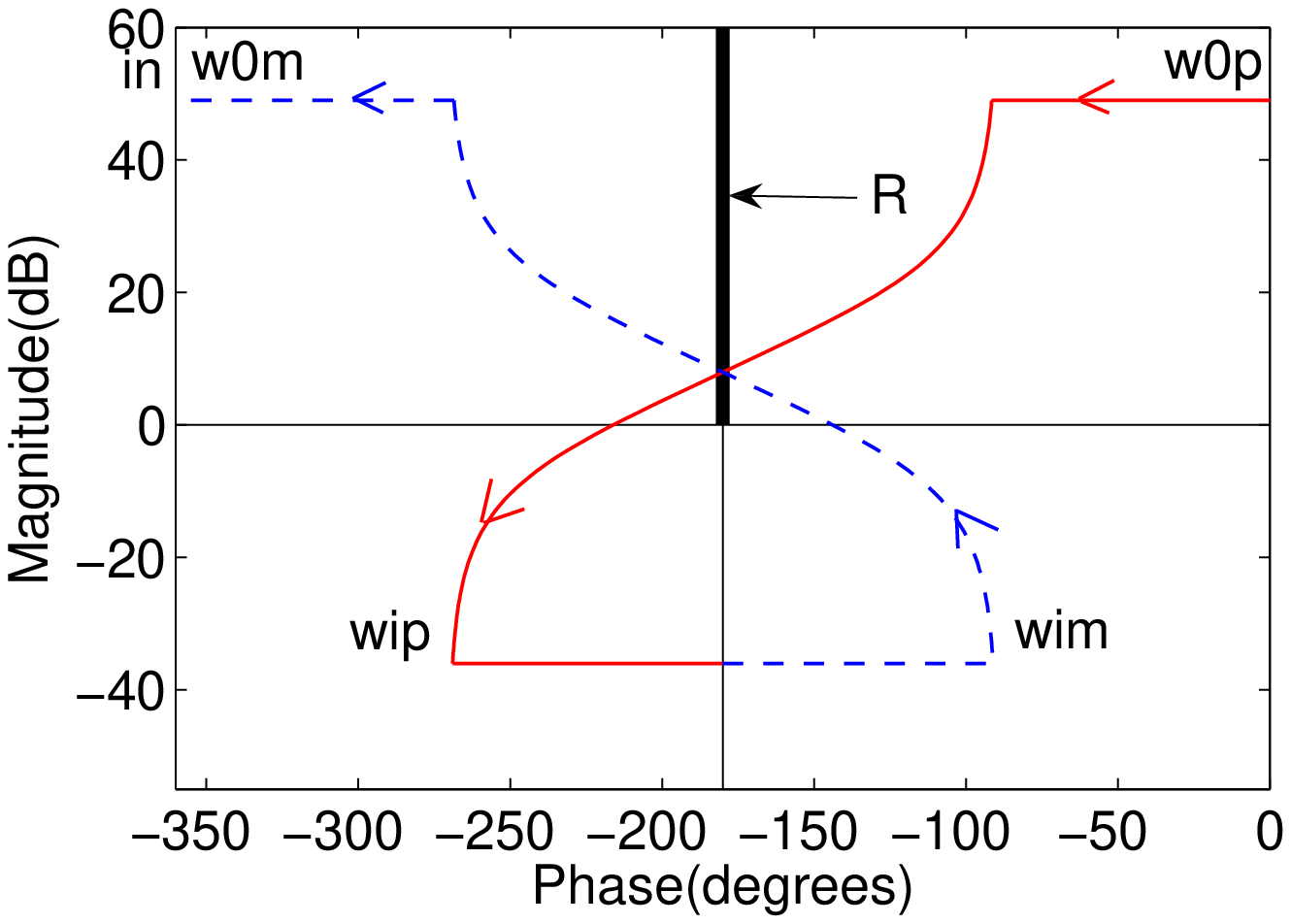}}
\caption{Nyquist plot of Example \ref{Integral_NMP_Stable_Ny_Ex}.}
     \label{Fig:Integral_Stable_NMP_Ny_Ex}
\end{figure}
\begin{example}
\label{Integral_NMP_Stable_Ny_Ex} In this example, the stability conditions is investigated for a system with an integrator and NMP zero. Consider the feedback system \ref{Fig:1DOF_standard_Feedback_structure} with an unstable and non-minimum phase loop-function as given by:
\begin{equation} \label{Eq:Integral_Stable_NMP_Ny_Ex}
L(s)=\frac{K(\frac{s}{2}-1)}{s(\frac{s}{1}+1)}\end{equation}
Using the crossing concept, determine the system stability for $K=-1$ and $K=-5$.
\end{example}
\begin{solution}
The system does not have nay pole inside the standard Nyquist contour, then $N_p=0$. According to the Nyquist diagram of the $L(s)$ as shown Fig. \ref{SubFig:Integral_Stable_NMP_Ny_Ex1}, the number of crossings of $R_{(-\infty,-1)}$ is $0$ for $K=-1$. It means that the Nyquist diagram does not encircle the critical point $(-1,0)$. Using (\ref{Nyquist_Criterion_Eq}), $N_z=N+N_p=0+0=0$ which implies the feedback system is stable for $K=1$. There is also no crossing as shown by Fig. \ref{SubFig:Integral_Stable_NMP_NC_Ex1}. Then, $N \neq -N_p=0$ and the feedback system is stable for $K=-1$.

Subsequently, Fig. \ref{SubFig:Integral_Stable_NMP_Ny_Ex2} shows the Nyquist diagram of $L(s)$ for $K=-5$. The number of crossings of $R_{(-\infty,-1)}$ is $N=+2$, implying that the Nyquist diagram encircles the critical point $(-1,0)$ once in the clockwise direction. By using (\ref{Nyquist_Criterion_Eq}), $N_z=N+N_p=2+0=2$, saying that the feedback system has two unstable poles and is unstable for $K=5$. For $K=-5$, the sum of crossing is $N=2$ as shown by Fig. \ref{SubFig:Intehral_Stable_NMP_NC_Ex2}. Thus, $N \neq -N_p$ and the feedback system is unstable.
\end{solution}

\section{Conclusions}
\label{sec:Conclusions}
In this paper stability analysis techniques based on Nyquist and Nichols charts have been reviewed. The relationship between theses two methodologies has fully been described through several numerical examples. This tutorial provides useful insights into the loop-shaping based control systems design such as Quantitative Feedback Theory.

\section{Appendix}

\subsection{Harry Nyquist (1889-1976)}
Harry Nyquist was born in the Stora Kil parish of Nilsby, Värmland, Sweden in 1889. He emigrated to the USA in 1907. He entered the University of North Dakota in 1912 and received B.S. and M.S. degrees in electrical engineering in 1914 and 1915, respectively. He received a Ph.D. in physics at Yale University in 1917. He worked at AT\&T's Department of Development and Research from 1917 to 1934, and continued when it became Bell Telephone Laboratories that year, until his retirement in 1954. As an engineer at Bell Laboratories, Nyquist contributed to many communications problems. In 1932, he published a classic paper on stability of feedback amplifiers, and his Nyquist stability criterion has been the reference in all textbooks of feedback control theory. Nyquist lived in Pharr, Texas after his retirement, and passed away on April 4, 1976.
\begin{figure}
\centering
\includegraphics[scale=1]{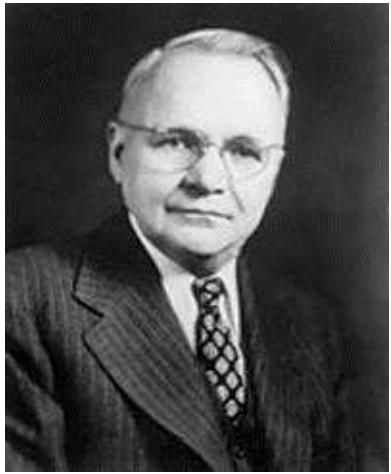}
\caption{Harry Nyquist (1889–1976)}
     \label{Pic-Harry Nyquist}
\end{figure}

\subsection{Nathaniel B. Nichols (1914-1997)\cite{Obituary-NickNichols}}
Nathaniel B. Nichols was born in Michigan in 1914 and earned a BS degree from Central Michigan University in 1936. He went from there to the University of Michigan, earning a MS degree in physics in 1937. He received two Doctor of Philosophy honorary degrees from Case Western Reserve University and from Central Michigan University. In 1942, John Ziegler and Nichols published a paper in the ASME Transactions (Vol. 64, Pg. 759) describing a set of parameters (later known as the Ziegler-Nichols tuning parameters) which were rough approximations to optimal settings for open loop transfer functions to ensure prescribed behavior in closed loop PID realizations. Nichols and his colleagues W.P. Manger and E.H. Krohn published their idea of reading closed loop gain and phase directly from a plot of open loop logrithmic gain and phase which is known as the Nichols chart. The idea was published entitled ``General Design Principles for Servomechanisms" in Chapter 4 of the now famous Volume 25, ``Theory of Servomechanisms" \cite{Nichols-Theory-of-Servomechanisms}. He was President of the IEEE Control Systems Society in 1968 and the American Automatic Control Council in 1974 and in 1975 when the IFAC Congress was first held in the US. After so many technical contributions and services to control engineering community, Nichols was retired from the Control Analysis Department of Aerospace Corporation in El Segundo, California in 1974, and passed away on 17 April 1997.
\begin{figure}
\centering
\includegraphics[scale=1]{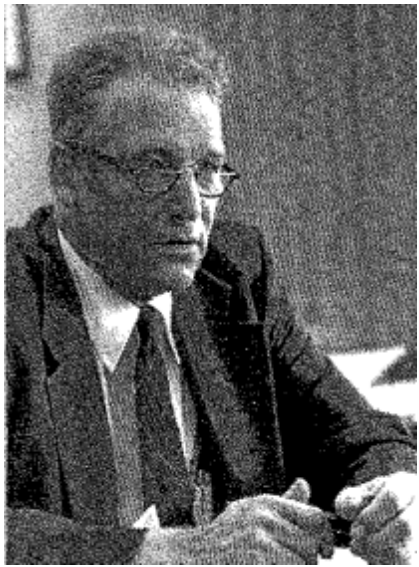}
\caption{Nathaniel B. Nichols (1914-1997)}
     \label{Pic-Nichols}
\end{figure}



\end{document}